\def\A{\mathcal A}
\def\uu{\mathbf{u}}
\def\vv{\mathbf{v}}
\theoremstyle{definition}
\newtheorem{definition}{Definition}
\newtheorem{corollary}[definition]{Corollary}
\newtheorem{remark}[definition]{Remark}
\newtheorem{example}[definition]{Example}
\theoremstyle{plain}
\newtheorem{theorem}[definition]{Theorem}
\newtheorem{proposition}[definition]{Proposition}
\newtheorem{lemma}[definition]{Lemma}
\newtheorem{observation}[definition]{Observation}
\author[L. Dvo\v r\'akov\'a and V. Hendrychov\'a]{Lubom\'ira Dvo\v r\'akov\'a \thanks{Supported by the Ministry of Education, Youth and Sports of the Czech Republic through the project CZ.02.1.01/0.0/0.0/16\_019/0000778.} \and Veronika Hendrychov\'a\thanks{Supported by Czech Technical University in Prague, through the project SGS23/187/OHK4/3T/14.}}
  \title{String attractors of Rote sequences}
\affiliation{
  Czech Technical University in Prague, Czech Republic}
\keywords{pseudostandard sequences, generalized pseudostandard sequences, string attractors, Rote sequences, palindromic closure, antipalindromic closure 
}
\begin{document}
\publicationdata{vol. 26:3}{2024}{2}{10.46298/dmtcs.12385}{2023-10-10; 2023-10-10; 2024-03-04; 2024-07-18}{2024-07-23}

\maketitle

\begin{abstract}
  In this paper, we describe minimal string attractors of pseudopalindromic prefixes of standard complementary-symmetric Rote sequences. Such a class of Rote sequences forms a subclass of binary generalized pseudostandard sequences, \textit{i.e.}, of sequences obtained when iterating palindromic and antipalindromic closures. When iterating only palindromic closure, palindromic prefixes of standard Sturmian sequences are obtained and their string attractors are of size two. However, already when iterating only antipalindromic closure, antipalindromic prefixes of binary pseudostandard sequences are obtained and we prove that the minimal string attractors are of size three in this case. We conjecture that the pseudopalindromic prefixes of any binary generalized pseudostandard  sequence have a minimal string attractor of size at most four.  

\end{abstract}

\section{Introduction}
In the last years, significant attention has been dedicated to the study of string attractors. Their definition and first results were provided by Kempa and Prezza~\cite{KempaPrezza2018}: a \emph{string attractor} of a finite word $w=w_0w_1\cdots w_{n-1}$, where $w_i$ are letters, is a subset $\Gamma$ of $\{0,1,\dots, n-1\}$ such that each non-empty factor of $w$ has an occurrence containing an element of $\Gamma$. They are closely related to methods of compression of highly repetitive data, so-called dictionary compressors. On one hand, it was shown by Kempa and Prezza~\cite{KempaPrezza2018} that dictionary compressors can be interpreted as approximation algorithms for the smallest string attractors, due to the measures induced by the compressors being lower bounded by the smallest string attractor size. On the other hand, attractors are able to express bounds for and potentially unify the dictionary compression measures. 

However, the general problem of finding the minimum size of a string attractor is NP-complete. It is therefore natural to study the problem in the context of combinatorics on words and to put certain restrictions on the input, which makes the computation tractable.

In this paper we carry on the study of string attractors of important classes of sequences. 
Attractors of minimum size of factors/prefixes/particular prefixes of the following sequences have been determined so far: standard Sturmian sequences~\cite{Mantaci2021, Dv2023}, the Tribonacci sequence~\cite{Shallit2021}, episturmian sequences~\cite{Dv2023}, the Thue-Morse sequence~\cite{Kutsukake2020, Shallit2021, Dolce2023}, the period-doubling sequence~\cite{Shallit2021}, the powers of two sequence~\cite{Shallit2021, Kociumaka2021}.
Recently, string attractors of fixed points of $k$-bonacci-like morphisms have been described~\cite{GhRoSt2023}.

When studying string attractors of episturmian sequences~\cite{Dv2023}, we could see usefulness of palindromic closures. It is thus natural to ask as the next question what can be said about string attractors of binary generalized pseudostandard sequences (defined in~\cite{LuDeLu}), \textit{i.e.}, when iterating palindromic and antipalindromic closures.
Two subclasses of generalized pseudostandard sequences have been already studied: standard Sturmian sequences and the Thue-Morse sequence. 
Here, we consider pseudostandard sequences, \textit{i.e.}, binary sequences obtained when iterating solely the antipalindromic closure, and complementary-symmetric Rote sequences. We show that the minimal string attractor of antipalindromic prefixes of pseudostandard sequences is of size three, while the minimal string attractor of pseudopalindromic prefixes of complementary-symmetric Rote sequences is of size two. In the latter case, the description of Rote sequences from~\cite{MaPa} plays an important role.  Based on computer experiments, we conjecture that the minimum size of string attractors of pseudopalindromic prefixes of binary generalized peudostandard sequences is at most four.

\section{Preliminaries}
\label{Section_Preliminaries}
An \emph{alphabet} $\A$ is a finite set of symbols called \emph{letters}.
A \emph{word} over $\A$ of \emph{length} $n$ is a string $u = u_0 u_1 \cdots u_{n-1}$, where $u_i \in \A$ for all $i \in \{0,1, \ldots, n-1\}$. We let $|u|$ denote the length of $u$.
The set of all finite words over $\A$ together with the operation of concatenation forms a monoid, denoted $\A^*$.
Its neutral element is the \emph{empty word} $\varepsilon$ and we write $\A^+ = \A^* \setminus \{\varepsilon\}$.
If $u = xyz$ for some $x,y,z \in \A^*$, then $x$ is a \emph{prefix} of $u$, $z$ is a \emph{suffix} of $u$ and $y$ is a \emph{factor} of $u$.

A \emph{sequence} over $\A$ is an infinite string $\uu = u_0 u_1 u_2 \cdots$, where $u_i \in \A$ for all $i \in \naturals$. 
We always denote sequences by bold letters. 

A sequence $\uu$ is \emph{eventually periodic} if $\uu = vwww \cdots = v(w)^\omega$ for some $v \in \A^*$ and $w \in \A^+$. If $\uu$ is not eventually periodic, then it is \emph{aperiodic}.
A~\emph{factor} of $\uu = u_0 u_1 u_2 \cdots$ is a word $y$ such that $y = u_i u_{i+1} u_{i+2} \cdots u_{j-1}$ for some $i, j \in \naturals$, $i \leq j$. If $i=j$, then $y=\varepsilon$.
In the context of string attractors, the set $\{i, i+1,\dots, j-1\}$ is called an \emph{occurrence} of the factor $y$ in $\uu$. (Usually, only the number $i$ is called an occurrence of $y$ in $\uu$.)
If $i=0$, the factor $y$ is a~\emph{prefix} of $\uu$.
A factor $w$ of $\uu$ is \emph{left special} if $aw, bw$ are factors of $\uu$ for at least two distinct letters $a,b \in \A$. A sequence $\uu$ is said to be \emph{closed under reversal} if for each factor $w=w_0 w_1\cdots w_{n-1}$, where $w_i \in \mathcal A$, $\uu$ contains also its mirror image $w_{n-1}\cdots w_1w_0$.
A binary sequence $\uu$ is called \emph{Sturmian} if $\uu$ is closed under reversal and $\uu$ contains exactly one left special factor of each length. If moreover each left special factor is a prefix of $\uu$, then $\uu$ is \emph{standard Sturmian}.

A \emph{string attractor} (or \emph{attractor} for short) of a word $w=w_0w_1\cdots w_{n-1}$, where $w_i\in \mathcal A$, is a set $\Gamma \subset \{0,1,\dots, n-1\}$ such that every non-empty factor of $w$ has an occurrence in $w$ containing at least one element of $\Gamma$. If $i \in \Gamma$ and a word $f$ has an occurrence in $w$ containing $i$, we say that $f$ \emph{crosses} $i$ and we also say that $f$ \emph{ crosses the attractor} $\Gamma$. For instance, $\Gamma=\{1,3\}$ is an attractor of $w=\tt 0\underline{1}0\underline{0}10$ (it corresponds to the underlined positions). The factor $\tt 00$ crosses the position 3 and thus it crosses the attractor. $\Gamma$ is an attractor of minimum size -- minimal attractor for short -- since each attractor necessarily contains occurrences of all distinct letters of the word.

\section{Palindromic and antipalindromic closures}
Throughout the paper, we deal only with binary sequences. Therefore we define the notions of reversal and palindrome over a binary alphabet, too. 
\label{sec:closures}
\begin{definition}
The map $R: \{\tt 0, \tt 1\}^* \to \{\tt 0, \tt 1\}^*$, called \emph{reversal}, associates with each word its mirror image, \textit{i.e.}, $R(w_0w_1\cdots w_{n-1})=w_{n-1}\cdots w_1 w_0$, where $w_i \in \{\tt 0, \tt 1\}$ for each $i \in \{0,1,\dots, n-1\}$.
The map $E: \{\tt 0, \tt 1\}^* \to \{\tt 0, \tt 1\}^*$, called \emph{exchange antimorphism}, is a composition of reversal and letter exchange, \textit{i.e.}, $E(w_0w_1\cdots w_{n-1})=\overline{w_{n-1}}\cdots \overline{w_1} \ \overline{w_0}$, where $w_i \in \{\tt 0, \tt 1\}$  and $\overline{\tt 0}=\tt 1$ and $\overline{\tt 1}=\tt 0$. 
A~word $w \in \{\tt 0, \tt 1\}^*$ is a \emph{palindrome} if $w=R(w)$ and $w$ is an \emph{antipalindrome} if $w=E(w)$. A word is a \emph{pseudopalindrome} if it is a palindrome or an antipalindrome. 

Consider $w \in \{\tt 0, \tt 1\}^*$. Then $w^{R}$ is the shortest palindrome having $w$ as prefix and it is called the \emph{palindromic closure} of $w$. Similarly, $w^E$ is the shortest antipalindrome having $w$ as prefix and it is called the \emph{antipalindromic closure} of $w$. The \emph{pseudopalindromic closure} is a term covering both palindromic and antipalindromic closure. 
\end{definition}
Let $w \in \{\tt 0, \tt 1\}^{+}$. It follows immediately from the definition that $w^{R}=vxR(v)$, where $w=vx$ and $x$ is the longest palindromic suffix of $w$.
Similarly, $w^{E}=vyE(v)$, where $y$ is the longest antipalindromic suffix of $w$ (possibly empty).
\begin{example}
We have $({\tt 000})^R=\tt 000$, $({\tt 000})^E=\tt 000111$, $({\tt 0001})^{R}=\tt 0001000$, $({\tt 0001})^E=\tt 000111$, $({\tt 01101})^R=\tt 0110110$, $({\tt 01101})^{E}=\tt 01101001$.
\end{example}
\begin{definition}\label{def:generalized_pseudostandard}
Let $\Delta = \delta_1 \delta_2 \cdots$ and $\Theta = \vartheta_1 \vartheta_2 \cdots$, where $\delta_i \in \{\tt 0,1\}$ and $\vartheta_i \in \{E, R\}$ for all $i \in \naturals, i\geq 1$. The sequence $\mathbf{u}(\Delta, \Theta)$, called \emph{generalized pseudostandard sequence}, is the sequence having prefixes $w_n$ obtained from the recurrence relation
\begin{displaymath} w_{n+1} = (w_n \delta_{n+1})^{\vartheta_{n+1}},\end{displaymath}
\begin{displaymath} w_0 = \varepsilon.\end{displaymath} The bi-sequence $(\Delta, \Theta)$ is called the \emph{directive bi-sequence} of the word $\mathbf{u}(\Delta, \Theta)$.
\end{definition}
\begin{example}
Consider $\uu=\uu(\Delta, \Theta)$ with $\Delta={\tt 0}{\tt 1}^{\omega}$ and $\Theta=(RE)^{\omega}$, then $\uu$ is the Thue-Morse sequence~\cite{LuDeLu}. Here are the first six pseudopalindromic prefixes:
\begin{displaymath}
\begin{array}{rcll}
  w_0&=&\varepsilon \\
  w_1&=&{\tt 0}\\
  w_2&=&{\tt 01} \\
  w_3&=&{\tt 0110}\\
  w_4&=&{\tt 01101001}\\
  w_5&=&{\tt 0110100110010110}\,.
\end{array}
\end{displaymath}

\end{example}

For some special forms of $(\Delta, \Theta)$, well-known classes of sequences are obtained:
\begin{enumerate}
    \item $\uu$ is a \emph{standard Sturmian sequence} if $\uu=\mathbf{u}(\Delta, \Theta)$ for some $\Delta$ containing both letters infinitely many times and $\Theta=R^{\omega}$.
    \item $\uu$ is a \emph{pseudostandard sequence} if $\uu=\mathbf{u}(\Delta, \Theta)$ for some $\Delta$ and $\Theta=E^{\omega}$.
\end{enumerate}

In the sequel, when examining Rote sequences, we will need the following statement about the form of palindromic prefixes of a standard Sturmian sequence.~\footnote{The palindromic prefixes of standard Sturmian sequences are also known as central words~\cite{Lothaire}.}
\begin{proposition}[Proposition 7~\cite{LuMi94}]\label{prop:Sturmian_palindromes}
Let $\uu$ be a standard Sturmian sequence and let $(u_n)_{n=0}^{\infty}$ be the sequence of palindromic prefixes of $\uu$ ordered by length. If $u_n$ contains both letters, then for some $a\in \{\tt 0, \tt 1\}$
\begin{displaymath}
u_n=u_{n-1}a\overline{a}u\,,
\end{displaymath}
where $u$ is the longest palindromic prefix of $u_n$ followed by $\overline{a}$ (\textit{i.e.}, $u\overline a$ is a prefix of $u_n$).
\end{proposition}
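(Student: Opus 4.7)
My plan is to combine the explicit palindromic-closure formula $w^R = v\,x\,R(v)$ (where $w = vx$ and $x$ is the longest palindromic suffix of $w$) with the iterative construction of $\uu$. Since $\uu$ is standard Sturmian, there is a directive sequence $\Delta = \delta_1 \delta_2 \cdots$, containing both letters infinitely often, with $u_0 = \varepsilon$ and $u_m = (u_{m-1}\delta_m)^R$ for every $m \geq 1$. Setting $a := \delta_n$ and writing $u_{n-1}\,a = v\,x$ with $x$ the longest palindromic suffix of $u_{n-1}a$, the closure formula gives
\[
u_n \;=\; u_{n-1}\, a\, R(v).
\]
The task then reduces to exhibiting $v = u\,\overline{a}$ for the palindromic prefix $u$ from the statement, after which $R(v) = \overline{a}\,u$ and $u_n = u_{n-1}\, a\, \overline{a}\, u$ follow immediately.

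The key technical step, and the step I expect to be the main obstacle, is the following clean description of $x$. Reversing $u_{n-1}\,a = v\,x$ and using that $u_{n-1}$ is a palindrome yields $a\,u_{n-1} = x\,R(v)$, so $x$ is equivalently the longest palindromic prefix of $a\,u_{n-1}$. Any palindromic prefix of $a\,u_{n-1}$ of length at least $2$ begins and ends with $a$, hence has the form $a\,y\,a$ for some palindrome $y$; moreover $a\,y\,a$ is a prefix of $a\,u_{n-1}$ precisely when $y\,a$ is a prefix of $u_{n-1}$. I would use this bijection $y \leftrightarrow a\,y\,a$ between palindromic prefixes of $u_{n-1}$ followed by $a$ and palindromic prefixes of $a\,u_{n-1}$ of length $\geq 3$ to conclude
\[
x \;=\; a\, y\, a, \qquad y = \text{the longest palindromic prefix of $u_{n-1}$ followed by $a$},
\]
with the degenerate variant $x = a$ arising only before $u_n$ contains both letters.

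I would then conclude by induction on $n$. The base case is $n = k_0+1$, where $k_0$ is the least index with $\delta_{k_0+1} \neq \delta_1$; then $u_{k_0} = \delta_1^{k_0}$ and a direct computation gives $u_n = \delta_1^{k_0}\,\overline{\delta_1}\,\delta_1^{k_0}$, matching the statement with $u = \delta_1^{k_0-1}$. For the inductive step, apply the IH to $u_{n-1} = u_{n-2}\,b\,\overline{b}\,u_{k'}$, where $b = \delta_{n-1}$ and $u_{k'}$ is the longest palindromic prefix of $u_{n-1}$ followed by $\overline{b}$, and split on whether $a = b$ or $a = \overline{b}$. If $a = b$, then $u_{n-2}$ (which is followed by $b = a$) plays the role of $y$ in the key step, so $x = a\,u_{n-2}\,a$, $|v| = |u_{k'}|+1$, and reading the first $|v|$ letters off the IH factorization of $u_{n-1}$ yields $v = u_{k'}\,\overline{a}$, giving $u = u_{k'}$. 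If instead $a = \overline{b}$, then $u_{n-2}$ is followed by $\overline{a}$ and $y$ descends to $u_{k'}$, so $x = a\,u_{k'}\,a$, $|v| = |u_{n-2}|+1$, $v = u_{n-2}\,\overline{a}$, and $u = u_{n-2}$. In both cases, palindromic prefixes of $u_n$ strictly shorter than $u_n$ coincide with those of $u_{n-1}$, and $u_{n-1}$ is followed in $u_n$ by $a \neq \overline{a}$, so the longest palindromic prefix of $u_n$ followed by $\overline{a}$ is exactly the $u$ just identified, completing the proof.
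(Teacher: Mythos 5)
The paper itself gives no proof of Proposition~\ref{prop:Sturmian_palindromes}: it is imported from de Luca--Mignosi, so there is no internal argument to measure you against; your proposal has to stand on its own, and it does. It is correct, granted the standard fact (which the paper also uses tacitly in the proof of Theorem~\ref{thm:sturmian_attractor}) that the palindromic prefixes of a standard Sturmian sequence are exactly the prefixes $w_n$ generated by iterated palindromic closure along a directive sequence containing both letters infinitely often. Your key step is sound: writing $u_{n-1}a=vx$ with $x$ the longest palindromic suffix, reversal plus palindromicity of $u_{n-1}$ identifies $x$ with the longest palindromic prefix of $a\,u_{n-1}$, and any such prefix of length at least $2$ is $a\,y\,a$ with $y$ a palindromic prefix of $u_{n-1}$ followed by $a$; the induction then correctly yields $u=u_{k'}$ when $a=\delta_{n-1}$ and $u=u_{n-2}$ when $a=\overline{\delta_{n-1}}$, and the closing identification of $u$ as the longest palindromic prefix of $u_n$ followed by $\overline a$ is justified because the only palindromic prefix of $u_n$ not already a prefix of $u_{n-1}$ shorter than $u_{n-1}$ is $u_{n-1}$ itself, which is followed by $a$.

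Two small points to tighten, neither fatal. First, your bijection should be stated for palindromic prefixes of $a\,u_{n-1}$ of length at least $2$, not $3$: the case $y=\varepsilon$ genuinely occurs (for instance $\Delta={\tt 010}\cdots$ gives $y=\varepsilon$ at the step producing $u_3={\tt 010010}$), and it is exactly the length-$2$ prefix $aa$ that realizes it; your phrase ``length at least 2 \dots has the form $aya$'' shows you meant this, so just make the two statements consistent. Second, the remark that the degenerate case $x=u_{n-1}a$ (equivalently $v=\varepsilon$, where the claimed factorization would fail) ``arises only before $u_n$ contains both letters'' deserves its one-line justification: if $p$ and $pa$ are both palindromes then $pa=R(pa)=ap$, forcing $p\in\{a\}^*$, so $u_n$ would be a power of a single letter, which the hypothesis excludes.
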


\section{String attractors of Sturmian sequences}
String attractors of palindromic prefixes of episturmian sequences were described in~\cite{Dv2023} (Theorem 7). Let us recall here the statement restricted to the binary alphabet, together with its proof.  Similar ideas will be used for pseudostandard sequences.

\begin{theorem}\label{thm:sturmian_attractor}\cite{Dv2023}
Let $v$ be a non-empty palindromic prefix of a standard Sturmian sequence. 
For every letter $a$ occurring in $v$,  denote 
\begin{displaymath}
r_a=\max\{|p| \, : \, p \emph{\  is a palindrome  and }  pa \emph{ is a prefix of} \ v\}.
\end{displaymath}
Then $\Gamma = \{r_a: a \emph{ occurs in  } v\} $ is an attractor of $v$ and it is of minimum size.
\end{theorem}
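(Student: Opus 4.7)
The lower bound is immediate: any attractor of $v$ must contain an occurrence of every distinct letter of $v$, matching $|\Gamma|$. The substance is therefore in showing that $\Gamma$ is an attractor. I would proceed by induction on the index $n$ of $v = u_n$ in the sequence of palindromic prefixes; the base case $v = u_1$ (a single letter) is trivial.

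For the inductive step, Proposition~\ref{prop:Sturmian_palindromes} gives
\[
u_n \;=\; u_{n-1}\,a\,\overline{a}\,u,
\]
with $u$ the longest palindromic prefix of $u_n$ followed by $\overline{a}$. Applying the reversal map and using that $u_n$, $u_{n-1}$, and $u$ are palindromes yields the symmetric mirror decomposition $u_n = u\,\overline{a}\,a\,u_{n-1}$. Inspecting the two decompositions identifies the attractor positions as $r_a = |u_{n-1}|$ (the letter $a$ right after the palindromic prefix $u_{n-1}$, which is necessarily the longest palindromic prefix of $u_n$ followed by $a$) and $r_{\overline{a}} = |u|$. For a factor $w$ of $u_n$ that is \emph{not} already a factor of $u_{n-1}$, observe that $|u| < |u_{n-1}|$ (otherwise $u_{n-1}$ would be followed both by $a$ and by $\overline{a}$), so the prefix $u\overline{a}$ of $u_n$ also fits inside $u_{n-1}$ as a prefix; by palindromicity of $u_{n-1}$ its reverse $\overline{a}u$ is then a suffix of $u_{n-1}$, hence a factor of $u_{n-1}$. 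But $\overline{a}u$ is also the suffix of $u_n$ of length $|u|+1$, so no occurrence of $w$ can lie entirely in positions $>|u_{n-1}|$; combined with the impossibility of $w$ sitting inside the prefix copy of $u_{n-1}$, every occurrence of $w$ straddles position $|u_{n-1}| = r_a$, and $\Gamma$ is crossed.

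For a factor $w$ that \emph{is} already a factor of $u_{n-1}$, the inductive hypothesis places an occurrence of $w$ in the prefix copy of $u_{n-1}$ crossing the previous attractor $\Gamma_{n-1}$; by palindromicity of $u_{n-1}$ it also appears in the mirrored suffix copy inside $u_n$ at positions shifted by $|u|+2$. One then checks that either the prefix occurrence crosses $r_{\overline{a}} = |u|$ or the shifted suffix occurrence crosses $r_a = |u_{n-1}|$. The main obstacle is precisely this verification: I need to rule out the existence of a factor of $u_{n-1}$ whose every occurrence in $u_n$ evades both $|u|$ and $|u_{n-1}|$ — equivalently, whose occurrences in $u_{n-1}$ all sit strictly between $|u|$ and $|u_{n-1}| - |u| - 2 - |w|$. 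Here the crucial input is the \emph{maximality} of $u$ in Proposition~\ref{prop:Sturmian_palindromes} as the longest palindromic prefix of $u_n$ followed by $\overline{a}$: this bounds the width of the potential evasion window inside $u_{n-1}$, so that any would-be evading factor would have to be accommodated by a palindromic prefix longer than $u$ and shorter than $u_{n-1}$ followed by $\overline{a}$, contradicting the choice of $u$. This closes the induction.
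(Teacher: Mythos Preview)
Your overall strategy matches the paper's: induct on $n$, use the two overlapping copies of $u_{n-1}$ inside $u_n$, and transport the inductive attractor to the new one. Your treatment of factors \emph{not} in $u_{n-1}$ is fine. The gap is in the other case.

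Once you know (by induction) that a factor $w$ of $u_{n-1}$ has an occurrence in the prefix copy crossing $\Gamma_{n-1}=\{r'_a,r'_{\overline a}\}$, the whole argument hinges on the arithmetic fact
\[
r'_a + (|u|+2) \;=\; |u_{n-1}|\;=\;r_a,
\]
i.e.\ the shift from the prefix copy to the suffix copy carries the old position $r'_a$ \emph{exactly} onto the new position $r_a$. This identity is just Proposition~\ref{prop:Sturmian_palindromes} applied to $u_{n-1}$ (equivalently, the two-period structure of central words: $|u_{n-1}|=r'_a+r'_{\overline a}+2$, and $r'_{\overline a}=|u|$). With it, the case closes in one line: if the inductive occurrence crosses $r'_{\overline a}=|u|=r_{\overline a}$ you are done, and if it crosses $r'_a$, the shifted occurrence crosses $r_a$. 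This is precisely how the paper argues, writing the closure as $w_n=R(u')\,a\,w_\ell\,a\,u'$ with $w_{n-1}=R(u')aw_\ell=w_\ell a u'$ to make the alignment visible.

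You never establish this identity. Instead you drop the inductive information and try to rule out an ``evading'' factor directly, claiming that maximality of $u$ forces such a factor to be ``accommodated by a palindromic prefix longer than $u$ and shorter than $u_{n-1}$ followed by $\overline a$''. There is no mechanism here: nothing connects an arbitrary factor of $u_{n-1}$ sitting in some window to a palindromic \emph{prefix} followed by $\overline a$. Moreover, your ``equivalently, occurrences sit strictly between $|u|$ and $|u_{n-1}|-|u|-2-|w|$'' is not an equivalence---it captures only one of the ways an occurrence can avoid both positions. The fix is not more maximality; it is the one-line shift identity above (and, incidentally, the observation that $r'_{\overline a}=r_{\overline a}$, which you use implicitly but never justify).
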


\begin{proof}
To construct a standard Sturmian sequence, we use only palindromic closure in Definition~\ref{def:generalized_pseudostandard} and each palindromic prefix $v$ is equal to $w_n$ for some $n \in \naturals$. Let us assume that the first letter of $\Delta$ is $\tt 0$. 
We will prove the statement by mathematical induction on $n$. Let us recall that we index positions from $0$, \textit{i.e.}, $v=v_0 v_1\cdots v_{|v|-1}$.
\begin{itemize}
    \item For $n=1$ we have $w_1=\tt 0$ and its attractor equals $\{0\}$. The longest palindromic prefix of $w_1$ followed by $\tt 0$ is equal to $w_0=\varepsilon$ and its length satisfies $|w_0|=0$.
    \item For $n\geq 2$ we assume that $w_{n-1}$ has an attractor of the form from the statement. We have $w_n=(w_{n-1}{a})^{R}$ for some $a \in \{\tt 0,1\}$. The following three situations may occur:
    \begin{enumerate}
        \item $w_n=w_{n-1}a$: According to the definition of palindromic closure, this happens only for $a=\tt 0$ and $w_{n-1}={\tt 0}^{n-1}$. The longest palindromic prefix of $w_n={\tt 0}^{n}$ followed by $\tt 0$ is $w_{n-1}={\tt 0}^{n-1}$. The length of $w_{n-1}$ is $n-1$ and, indeed, $\{n-1\}$ is an attractor of $w_n$.
        \item $w_n=w_{n-1}aw_{n-1}$: By the definition of palindromic closure, this happens only in case when $w_{n-1}={\tt 0}^{n-1}$ and $a=\tt 1$. Then $w_n={\tt 0}^{n-1}{\tt 1}{\tt 0}^{n-1}$ and $\Gamma=\{n-2, n-1\}$ is indeed an attractor of $w_n$.
        \item $w_n=w_{n-1}au$ for some $u \not =\varepsilon$ and $u \not =w_{n-1}$: Then $w_{n-1}$ contains both letters. We want to prove that $\Gamma=\{r_{\tt 0},  r_{\tt 1}\}$, as defined in the statement, is an attractor of $w_n$. Since the longest palindromic prefix of $w_n$ followed by $b$, where $b \in \{\tt 0,1\}$, $b\not =a$, is the same as in $w_{n-1}$, we know by induction assumption that $\{r_{b}, r'_{a}\}$ is an attractor of $w_{n-1}$, where $r'_{a}=|w_\ell|$ and $w_\ell$ is the longest palindromic prefix of $w_{n-1}$ followed by $a$.
        By the definition of palindromic closure we have 
        
        \begin{equation}\label{eq:w_n}
       w_n=\underbrace{R(u){a}w_\ell}_{w_{n-1}}{\underline{a}}u=R(u){a}\underbrace{w_\ell{\underline{a}}u}_{w_{n-1}}\,.
        \end{equation}
        
       Then each factor $f$ of $w_n$ either has an occurrence containing the position $|w_{n-1}|$, \textit{i.e.}, corresponding to the second (underlined) $a$ in~(\ref{eq:w_n}), or $f$ is entirely contained in $w_{n-1}$. In the latter case, $f$ has an occurrence crossing the attractor of $w_{n-1}$. Thus, $f$ either crosses the position $r_b$ or $f$ crosses the position $r'_{a}=|w_\ell|$. In the first case, we are done since $f$ crosses $\Gamma$. In the second case, according to~(\ref{eq:w_n}), the factor $f$ has also an occurrence in $w_n$ containing the position $r_{a}=|w_{n-1}|$ (corresponding to the underlined $a$). To sum up, we have proved that each factor of $w_n$ has an occurrence crossing $\{r_{a}, r_{b}\}=\{r_{\tt 0}, r_{\tt 1}\}=\Gamma$.
       
    \end{enumerate}
    \end{itemize}
\end{proof}

\begin{example}
\label{ex:FiboDef}
The most famous standard Sturmian sequence is the \emph{Fibonacci sequence}
\begin{displaymath}
\mathbf u = \mathbf u(\Delta, R^{\omega})\,,
\end{displaymath}
where $\Delta=(\tt 01)^{\omega}$.
The first six palindromic prefixes of $\mathbf u$ with the positions of the attractor from Theorem~\ref{thm:sturmian_attractor} underlined read:
\begin{displaymath}
\begin{array}{rcll}
  w_0&=&\varepsilon \\
  w_1&=&\underline{\tt 0}\\
  w_2&=&\underline{\tt 01}\tt 0 \\
  w_3&=&\tt 0\underline{\tt 1}\tt 0\underline{\tt 0}\tt 10\\
  w_4&=&\tt 010\underline{\tt 0}\tt 10\underline{\tt 1}\tt 0010\\
  w_5&=&\tt 010010\underline{\tt 1}\tt 0010\underline{\tt 0}\tt 1010010\,.
\end{array}
\end{displaymath}
\end{example}

\section{String attractors of pseudostandard sequences}
As a new result we will describe string attractors of antipalindromic prefixes of pseudostandard sequences. 
\begin{theorem} \label{thm:pseudostandard}
Let $v$ be a non-empty antipalindromic prefix of a pseudostandard sequence starting with the letter $\tt 0$. For every letter $a$ occurring in $v$,  denote 
\begin{displaymath}
e_a=\max\{|q| \, : \, q \emph{\  is an antipalindrome  and }  qa \emph{ is a prefix of} \ v\}.
\end{displaymath}
If such a prefix does not exist, then set $e_a=e_{\overline a}$.\\
Then $\Gamma = \{e_{\tt 0}, e_{\tt 1}, |v|-e_{\tt 1}-1\}$ is an attractor of $v$.

Moreover, 
if $v=w_n$, $n\geq 2$, from Definition~\ref{def:generalized_pseudostandard}, then 
\begin{itemize}
\item $\Gamma$ is of size two if and only if $\Delta$ starts with ${\tt 0}^{n}$;
\item $\Gamma$ is a minimum size attractor if and only if $\Delta$ does not start with ${\tt 0}{\tt 1}^{n-1}$. 
\end{itemize}
\end{theorem}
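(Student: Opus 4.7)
The plan is to proceed by induction on $n$, in the spirit of the proof of Theorem~\ref{thm:sturmian_attractor}. Write $w_n=(w_{n-1}\delta_n)^E = v'\,y\,E(v')$, where $y$ is the longest antipalindromic suffix of $w_{n-1}\delta_n = v'y$ and $v'$ is the corresponding ``absorbing'' prefix (not to be confused with the theorem's $v=w_n$). Observe that $|w_{n-1}|$ is even (since $w_{n-1}$ is antipalindromic), so $|w_{n-1}\delta_n|$ is odd and cannot itself be antipalindromic; hence $v'$ is always non-empty. Since $w_{n-1}$ is antipalindromic and $E$ is an antimorphism, one obtains the triple factorization
\begin{displaymath}
w_n \;=\; v'\,y\,E(v') \;=\; w_{n-1}\,\delta_n\, E(v') \;=\; v'\, \overline{\delta_n}\, w_{n-1},
\end{displaymath}
which exhibits two occurrences of $w_{n-1}$ inside $w_n$ (one at position $0$ and one at position $|v'|+1$) and locates the newly appended letter $\delta_n$ at the central position $|w_{n-1}|$ together with its complement $\overline{\delta_n}$ at the mirror position $|v'|$.

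Using this, I would first verify that $\Gamma=\{e_{\tt 0},e_{\tt 1},|v|-e_{\tt 1}-1\}$ is an attractor of $w_n$. Every factor of length at most $|w_{n-1}|$ occurs inside one of the two shifted copies of $w_{n-1}$ and, by the inductive hypothesis, has an occurrence crossing the attractor of $w_{n-1}$; one then checks that both embeddings send this attractor into $\Gamma$, where the embedding starting at position $|v'|+1$ uses antipalindromic symmetry and is precisely what motivates the third coordinate $|v|-e_{\tt 1}-1$. Every longer factor must contain the position $|w_{n-1}|$, which equals $e_{\delta_n}\in \Gamma$, because a standard consequence of iterated antipalindromic closure is that $w_{n-1}$ is the longest proper antipalindromic prefix of $w_n$, hence the longest antipalindromic prefix followed by $\delta_n$.

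For the size-two characterization, note that position $e_{\tt 0}$ carries $\tt 0$, position $e_{\tt 1}$ carries $\tt 1$, and by antipalindromic symmetry of $v$ position $|v|-e_{\tt 1}-1$ carries $\tt 0$; as $|v|$ is even, $|v|-e_{\tt 1}-1\neq e_{\tt 1}$. Hence $|\Gamma|\le 2$ is possible only when the fall-back clause $e_{\tt 1}=e_{\tt 0}$ is triggered, that is, when no antipalindromic prefix of $v$ is followed by $\tt 1$. A quick induction shows that this occurs exactly when $\Delta$ starts with ${\tt 0}^n$, in which case $w_n=({\tt 01})^n$ and $\Gamma=\{1,|v|-2\}$.

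Finally, for the minimum-size claim, I plan to handle the ``if'' direction --- that a size-$2$ attractor exists when $\Delta$ starts with ${\tt 0}{\tt 1}^{n-1}$ --- by iterating the antipalindromic closure in this special case to obtain an explicit description of $w_n$ and then exhibiting such an attractor by hand. The main obstacle will be the converse direction: that no size-$2$ attractor exists when $\Delta$ starts neither with ${\tt 0}^n$ nor with ${\tt 0}{\tt 1}^{n-1}$. Here the plan is to identify three factors of $w_n$ (likely bispecial factors controlled by the first index at which $\Delta$ deviates from ${\tt 0}{\tt 1}^{n-1}$) whose occurrence sets in $v$ are pairwise incompatible, so that no two positions can simultaneously cover all three of them, following the combinatorial obstruction strategy of~\cite{Mantaci2021, Dv2023}.
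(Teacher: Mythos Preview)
Your high-level plan (induction on $n$, exploiting the triple factorization $w_n=w_{n-1}\delta_nE(v')=v'\overline{\delta_n}w_{n-1}$) is the same as the paper's, but two steps of your inductive argument are genuinely wrong.

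\textbf{The covering claim is false.} You assert that every factor of $w_n$ of length at most $|w_{n-1}|$ occurs inside one of the two shifted copies of $w_{n-1}$. This fails already for $\Delta={\tt 010}\cdots$: here $w_2={\tt 011001}$, $w_3={\tt 011001011001}$, and the factor ${\tt 010110}=w_3[4{:}10]$ has length $|w_2|$ but lies in neither copy $[0,5]$, $[6,11]$ and is not a factor of $w_2$ at all. The paper avoids this by using the weaker (and true) dichotomy: either $f$ crosses the single position $|w_{n-1}|$ (which equals the new $e_{\delta_n}$), or $f$ lies entirely in the \emph{prefix} copy of $w_{n-1}$, and then one applies the induction hypothesis to that copy only.

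\textbf{``Both embeddings send the attractor into $\Gamma$'' is not true either.} Under the prefix embedding the old attractor positions sit at the same indices, but one of them, namely the previous $e_{\delta_n}$, is \emph{not} an element of the new $\Gamma$; it has been superseded by $|w_{n-1}|$. The paper handles this by a case split on $\delta_n\in\{{\tt 0},{\tt 1}\}$: for a factor of the prefix copy that crosses the obsolete position, one uses the suffix copy of $w_{n-1}$ to exhibit a second occurrence crossing either $|w_{n-1}|$ or the updated third coordinate $|w_n|-e_{\tt 1}-1$. This relocation step, not a blanket ``embedding'' argument, is the crux.

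\textbf{The size-two argument has a gap.} Your parity/letter argument rules out $e_{\tt 1}=|v|-e_{\tt 1}-1$, but it does \emph{not} rule out $e_{\tt 0}=|v|-e_{\tt 1}-1$ (both positions carry ${\tt 0}$), so $|\Gamma|\le 2$ does not force the fall-back clause $e_{\tt 1}=e_{\tt 0}$. The paper instead computes $\Gamma$ explicitly for the prefixes ${\tt 0}^n$ and ${\tt 0}^k{\tt 1}$ of $\Delta$, which settles the equivalence directly.

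\textbf{Minimality.} Your plan (find three factors with pairwise incompatible occurrence sets) is reasonable but only a plan. The paper's argument is concrete and different in flavour: it shows that for $n\ge k+1$ (with ${\tt 0}^k{\tt 1}$, $k\ge 2$, or ${\tt 0}{\tt 1}^k{\tt 0}$ a prefix of $\Delta$) the word $w_n$ decomposes as
\[
w_n=({\tt 01})^{p_0}{\tt 011001}({\tt 01})^{p_1}{\tt 011001}\cdots({\tt 01})^{p_j},
\]
then enumerates the finitely many two-position candidates that cover $\tt 00,\tt 01,\tt 10,\tt 11$ and checks that each misses $\tt 010$ or $\tt 101$. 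You will need some analogue of this block structure to make your obstruction explicit.
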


\begin{proof}
To construct a pseudostandard sequence $\uu$, we use only antipalindromic closure in Definition~\ref{def:generalized_pseudostandard} and each antipalindromic prefix $v$ is equal to $w_n$ for some $n \in \naturals$.  
We will prove the statement about the form of attractors of $w_n$ by mathematical induction on $n$. Let us recall that we index positions from $0$, \textit{i.e.}, $v=v_0 v_1\cdots v_{|v|-1}$.

\begin{itemize}
\item If ${\tt 0}^n$ is a prefix of $\Delta$, then we have $w_n=({\tt 01})^{n}$ and $e_{\tt 0}=2n-2$ and $e_{\tt 1}=e_{\tt 0}$. Then $\Gamma = \{e_{\tt 0},|w_n|-1-e_{\tt 0}\}=\{2n-2, 1\}$ is indeed an attractor of $w_n=\underline{\tt 0}\underline{\tt 1}$ for $n=1$, resp. $w_n={\tt 0}\underline{\tt 1}({\tt 01})^{n-2}\underline{\tt 0}{\tt 1}$ for $n \geq 2$ (we underlined the positions of $\Gamma$). In particular,
this proves the result for all $n\geq 1$ when $\Delta={\tt 0}^{\omega}$.
    \item Now, assume ${\tt 0}^{k}\tt 1$ is a prefix of $\Delta$ for some $k \geq 1$. Then $w_{k+1}= ({\tt 01})^{k}{\tt 10}({\tt 01})^{k}$, $e_{\tt 0}=|({\tt 01})^{k-1}|=2k-2$, and $e_{\tt 1}=|({\tt 01})^k|=2k$. It is readily seen that $\Gamma=\{2k-2, 2k, 2k+1\}$ is an attractor of $w_{k+1}=({\tt 01})^{k-1} \underline{\tt 0}{\tt 1}\underline{\tt 1}\underline{\tt 0}({\tt 01})^{k}$ (we underlined the positions of $\Gamma$). Using the first item, each $w_n$, for $1\leq n\leq k$, also has an attractor of the form from the statement. 
    \item For $n> k+1$ we assume that $w_{n-1}$ has an attractor of the form from the statement. The following two situations may occur:
    \begin{enumerate}
    \item $\delta_n=1$ and 
      $w_n=(w_{n-1}{\tt 1})^{E}=v {\tt 0} w_\ell {\tt 1} E(v)$,
            where $w_{\ell} \not= \varepsilon$, $w_{\ell}$ is the longest antipalindromic prefix followed by $\tt 1$ in $w_{n-1}$, and $v$ is a prefix of the word $w_n$. 
            The longest antipalindromic prefix $q$ followed by $\tt 0$ in $w_{n-1}$ and in $w_n$ is the same ($q$ may be empty if $k=1$). The longest antipalindromic prefix of $w_n$ followed by $\tt 1$ is equal to $w_{n-1}=v{\tt 0}w_{\ell}$. We assume that $\{|q|, |w_\ell|, |w_{n-1}|-|w_\ell|-1\}$ is an attractor of $w_{n-1}$ and we want to show that $\Gamma=\{|q|, |w_{n-1}|, |w_n|-|w_{n-1}|-1\}$ is then an attractor of $w_n$. Below, we underline the positions of the attractor of $w_{n-1}$ and the positions of $\Gamma$ in $w_n$:
            \begin{displaymath}
            \begin{array}{rcl}
            w_{n-1}&=&q\underline{\tt 0}\cdots=v \underline{\tt 0} w_\ell=w_\ell \underline{\tt 1}E(v)\,;\\
            w_n&=&q\underline{\tt 0}\cdots=v \underline{\tt 0} w_\ell \underline{\tt 1} E(v)\,.
        \end{array}
        \end{displaymath}
        Each factor $f$ of $w_n$ either crosses $|w_{n-1}|$, \textit{i.e.}, the underlined $\tt 1$, or $f$ is contained in $w_{n-1}$. By the form of the attractor of $w_{n-1}$, the factor $f$ crosses $|q|$ or $|v|=|w_{n-1}|-|w_\ell|-1=|w_n|-|w_{n-1}|-1$ or $|w_\ell|$. In the last case, since $w_{n-1}=w_\ell {\tt 1}E(v)$ is a suffix of $w_n$, we can see that $f$ has an occurrence in $w_n$ containing $|w_{n-1}|$.
        \item $\delta_n=\tt 0$ and $w_n=(w_{n-1}{\tt 0})^{E}=v {\tt 1} w_\ell {\tt 0} E(v)$,
            where $w_{\ell}$ is the longest antipalindromic prefix followed by $\tt 0$ in $w_{n-1}$ (it may be empty), and $v$ is a prefix of the word $w_n$. 
      The longest antipalindromic prefix of $w_n$ followed by $\tt 0$ is equal to $w_{n-1}$ and the longest antipalindromic prefix $q$ of $w_n$ followed by $\tt 1$ is the same as for $w_{n-1}$. We assume that $\{|q|, |w_\ell|, |w_{n-1}|-|q|-1\}$ is an attractor of $w_{n-1}$ and we want to show that $\Gamma=\{|q|, |w_{n-1}|, |w_n|-|q|-1\}$ is then an attractor of $w_n$. Below, we underline the positions of the attractor of $w_{n-1}$ and the positions of $\Gamma$ in $w_n$:
            \begin{displaymath}
            \begin{array}{rcl}
            w_{n-1}&=&q\underline{\tt 1}\cdots=\cdots \underline{\tt 0}q=w_{\ell}\underline{\tt 0}E(v)\,;\\
            w_n&=&q\underline{\tt 1}\cdots=\cdots \underline{\tt 0}q=v {\tt 1} w_\ell \underline{\tt 0} E(v)\,.
        \end{array}
        \end{displaymath}
        Each factor $f$ of $w_n$ either crosses $|w_{n-1}|$, \textit{i.e.}, the underlined $\tt 0$ in the last expression for $w_n$ above, or $f$ is contained in $w_{n-1}$. By the form of the attractor of $w_{n-1}$, the factor $f$ crosses $|q|$ or $|w_{n-1}|-|q|-1$ or $|w_\ell|$. In the last two cases, since $w_{n-1}=w_\ell {\tt 0}E(v)$ is a suffix of $w_n$, we can see that $f$ has an occurrence in $w_n$ containing $|w_n|-|q|-1$ or $|w_{n-1}|$.
    \end{enumerate}
\end{itemize}
Now let us show the attractor's minimality. 
 We could see in the above proof that if ${\tt 0}^{k}$ is a prefix of $\Delta$, then the attractor $\Gamma$ of $w_k$ from the theorem is of minimum size (equal to two). 
 \begin{itemize}
     \item As soon as ${\tt 0}^{k}\tt 1$ for $k\geq 2$ is a prefix of $\Delta$, then $w_{k+1}= ({\tt 01})^{k-1}{\tt 011001}({\tt 01})^{k-1}$. It follows from the following observation and Definition~\ref{def:generalized_pseudostandard} that for each $n\geq k+1$, the factor $\tt 00$, resp. $\tt 11$, only occurs as a factor of $\tt 011001$ in $w_n$. We can observe that the prefix ${\tt 0}^k \tt 10$ yields 
     \begin{displaymath}
     w_{k+2}= ({\tt 01})^{k-1}{\tt 011001}({\tt 01})^{k-1}{\tt 011001}({\tt 01})^{k-1}\,,
     \end{displaymath}
     and similarly, for the prefix ${\tt 0}^k \tt 11$, we obtain 
     \begin{displaymath}
     w_{k+2}= ({\tt 01})^{k-1}{\tt 011001}({\tt 01})^{k-2}{\tt 011001}({\tt 01})^{k-1}\,.
     \end{displaymath}
     Generally, 
     \begin{equation}\label{p_i}
         w_n=({\tt 01})^{p_0}{\tt 011001}({\tt 01})^{p_1}{\tt 011001}\cdots({\tt 01})^{p_{j-1}}{\tt 011001}({\tt 01})^{p_j}\,,
         \end{equation}
         where $p_i\in \{k-2, k-1\}$ and $p_0=p_j=k-1$. From this, we can also see that the factors ${\tt 011001}$ do not overlap anywhere in the word.
         
     An attractor of size two does not exist any more. Let us explain why. All factors of length two, \textit{i.e.}, $\tt 00, 01, 10, 11$, have to cross the attractor. We underline below the possible positions in $w_n$ from (\ref{p_i}) for an attractor of size two containing $\tt 00, 01, 10, 11$:
     \begin{enumerate}
     \item $w_{n}= \cdots ({\tt 01})^{p_i}{\tt 01\underline{1}0\underline{0}1}({\tt 01})^{p_{i+1}}\cdots$
     \item $w_{n}= \cdots ({\tt 01})^{p_i}{\tt 0\underline{1}1\underline{0}01}({\tt 01})^{p_{i+1}}\cdots$
     \item $w_{n}= \cdots ({\tt 01})^{p_i}{\tt 0110\underline{0}1}({\tt 01})^{p_{i+1}}\cdots ({\tt 01})^{p_{i+m}}{\tt 01\underline{1}001}({\tt 01})^{p_{i+m+1}}\cdots$
      \item $w_{n}= \cdots ({\tt 01})^{p_i}{\tt 011\underline{0}01}({\tt 01})^{p_{i+1}}\cdots ({\tt 01})^{p_{i+m}}{\tt 0\underline{1}1001}({\tt 01})^{p_{i+m+1}}\cdots$
     \item $w_{n}= \cdots ({\tt 01})^{p_i}{\tt 01\underline{1}001}({\tt 01})^{p_{i+1}}\cdots ({\tt 01})^{p_{i+m}}{\tt 0110\underline{0}1}({\tt 01})^{p_{i+m+1}} \cdots$
      \item $w_{n}= \cdots ({\tt 01})^{p_i}{\tt 0\underline{1}1001}({\tt 01})^{p_{i+1}}\cdots ({\tt 01})^{p_{i+m}}{\tt 011\underline{0}01}({\tt 01})^{p_{i+m+1} }\cdots$
     \end{enumerate}
     where $i, m \in \naturals, m\geq 1, i+m+1 \leq j$.
    However, in all the above cases, either $\tt 010$, or $\tt 101$ does not cross the attractor. 
 \item If ${\tt 0}{\tt 1}^k$ for $k\geq 1$ is a prefix of $\Delta$, then $w_{k+1}={\tt 01}{\tt \underline{1}0\underline{0}1}({\tt 1001})^{k-1}$, where we underlined the positions of an attractor of size two. In this case, the attractor $\Gamma$ from the theorem is not minimal, let us underline its positions: $w_{k+1}={\tt \underline{0}1}{\tt 1\underline{0}01}{(\tt 1001)}^{k-2}{\tt \underline{1}001}$ for $k\geq 2$ and $w_{k+1}={\tt \underline{0}1}{\tt \underline{1}\underline{0}01}$ for $k=1$.
  
 \item As soon as ${\tt 0}{\tt 1}^k\tt 0$ for $k\geq 1$ is a prefix of $\Delta$, then $w_{k+2}={\tt 01}({\tt 1001})^{k}({\tt 0110})^{k}\tt 01$. 
 It follows from Definition~\ref{def:generalized_pseudostandard} that for each $n\geq k+2$, the factor $\tt 00$, resp. $\tt 11$, only occurs as a factor of $\tt 011001$ in $w_n$. An attractor of size two does not exist -- the explanation is analogous as above.

 \end{itemize}
     
\end{proof}
\begin{corollary} Let $\uu$ be a pseudostandard sequence, i.e., $\uu=\uu(\Delta, E^{\omega})$.  
\begin{itemize}
\item If $\Delta\in \{{\tt 0}^{\omega}, {\tt 1}^{\omega}, {\tt 01^{\omega}}, {\tt 10^{\omega}}\}$, then all antipalindromic prefixes of $\uu$ have minimal attractors of size two. 
\item If $\Delta\not \in \{{\tt 0}^{\omega}, {\tt 1}^{\omega}, {\tt 01^{\omega}}, {\tt 10^{\omega}}\}$, then all sufficiently long antipalindromic prefixes of $\uu$ have minimal attractors of size three.
\end{itemize}
\end{corollary}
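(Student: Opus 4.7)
The plan is to deduce the corollary directly from Theorem~\ref{thm:pseudostandard}, whose two minimality bullets already characterise, for each $n\ge 2$, when $|\Gamma|=2$ and when $\Gamma$ is a minimum attractor of $w_n$. Theorem~\ref{thm:pseudostandard} is stated under the assumption that the pseudostandard sequence starts with $\tt 0$; the case where $\Delta$ starts with $\tt 1$ reduces to this one via the letter-exchange symmetry $\phi:{\tt 0}\leftrightarrow{\tt 1}$, which conjugates $\uu(\Delta,E^{\omega})$ and $\uu(\phi(\Delta),E^{\omega})$ and preserves attractor cardinalities, sending the pair $\{{\tt 0}^{\omega},{\tt 01}^{\omega}\}$ to $\{{\tt 1}^{\omega},{\tt 10}^{\omega}\}$. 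Hence it suffices to treat $\Delta$ beginning with $\tt 0$.

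I would then split into three sub-cases. If $\Delta={\tt 0}^{\omega}$, every length-$n$ prefix of $\Delta$ equals ${\tt 0}^n$ while (for $n\ge 2$) differing from ${\tt 0}{\tt 1}^{n-1}$, so the theorem yields $|\Gamma|=2$ and $\Gamma$ minimum for every $n$. If $\Delta={\tt 01}^{\omega}$, the length-$n$ prefix of $\Delta$ equals ${\tt 0}{\tt 1}^{n-1}$ while (for $n\ge 2$) differing from ${\tt 0}^n$, so the theorem yields $|\Gamma|=3$ with $\Gamma$ \emph{not} minimum; a strictly smaller attractor therefore exists, and since $w_n$ contains both letters, its size must be exactly two. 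In the remaining case, $\Delta$ starts with $\tt 0$ but is neither ${\tt 0}^{\omega}$ nor ${\tt 01}^{\omega}$, so there is a smallest $k_1\ge 2$ with $\delta_{k_1}={\tt 1}$ (existing because $\Delta\ne{\tt 0}^{\omega}$) and a smallest $k_0\ge 2$ with $\delta_{k_0}={\tt 0}$ (existing because $\Delta\ne{\tt 01}^{\omega}$); for every $n>\max(k_0,k_1)$ the length-$n$ prefix of $\Delta$ is neither ${\tt 0}^n$ nor ${\tt 0}{\tt 1}^{n-1}$, and Theorem~\ref{thm:pseudostandard} then delivers $|\Gamma|=3$ with $\Gamma$ minimum.

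I do not foresee any genuine obstacle, since Theorem~\ref{thm:pseudostandard} already performs all the combinatorial work. The only step deserving a sentence of justification is the letter-exchange reduction: $\phi$ is a bijection on $\{{\tt 0},{\tt 1}\}$, $\phi$ commutes letterwise with the antipalindromic closure operator $E$, and renaming letters preserves every set-theoretic property of string attractors, so attractor cardinalities transfer unchanged between the two versions.
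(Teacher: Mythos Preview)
Your proposal is correct and is precisely the intended derivation: the paper states the corollary immediately after Theorem~\ref{thm:pseudostandard} with no separate proof, so the reader is expected to read off, as you do, which prefixes of $\Delta$ satisfy or violate the two ``if and only if'' bullets of the theorem, supplemented by the trivial $n=1$ case and the ${\tt 0}\leftrightarrow{\tt 1}$ symmetry.
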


\begin{example}
Consider $\Delta=\tt 01001\cdots$. The first six prefixes of $\mathbf u(\Delta, E^{\omega})$ with the positions of attractor underlined read:\\
\begin{displaymath}
\begin{array}{rcll}
w_0&=&\varepsilon \\
w_1&=&\tt \underline{01}\\
w_2&=&\tt \underline{0}1\underline{10}01\\
w_3&=&\tt 01\underline{1}001\underline{0}11\underline{0}01\\
w_4&=&\tt 01\underline{1}001011001\underline{0}11\underline{0}01\\
w_5&=&\tt 011001011001\underline{0}11\underline{0}01\underline{1}001011001011001\,.\\
\end{array}
\end{displaymath}
\end{example}

\section{String attractors of complementary-symmetric Rote sequences}\label{sec:Rote}

This section is devoted to the study of attractors of pseudopalindromic prefixes of complementary-symmetric Rote sequences, which form a subclass of generalized pseudostandard sequences. However, besides being generalized pseudostandard sequences, they are also closely related to Sturmian sequences. 
\begin{definition}
 \emph{Rote sequences} are binary sequences having complexity $2n$ for each $n\geq 1$. A Rote sequence $\uu$ is called \emph{complementary-symmetric (CS)} if its language is closed under the letter exchange, \textit{i.e.}, for each factor $v=v_0 v_1\cdots v_{n-1}$ of $\uu$, the word $\overline{v}=\overline{v_0}\ \overline{v_1}\cdots \overline{v_{n-1}}$ is also a factor of $\uu$.    
\end{definition}
Let $u=u_0u_1\cdots u_{n-1}$ be a binary word on $\{\tt 0,1\}$ of length at least two. We denote by $S(u)$ the word $v = v_0v_1\cdots v_{n - 2}$ defined by
\[v_i = (u_{i+1} + u_i) \bmod 2 \quad \mbox{for $i = 0,1,\ldots,n-2$}.\]
For example, if $u=\tt 0011010$, then $S(u)=\tt 010111$.
The definition may be extended to sequences: if $\uu$ is a~sequence over $\{\tt 0,1\}$, then $S(\uu)$ denotes the sequence $\vv=v_0 v_1 v_2\cdots$, where 
\[v_i = (u_{i+1} + u_i) \bmod 2 \quad \mbox{for $i = 0,1,\ldots$}\]

CS Rote sequences are connected to Sturmian sequences by a structural theorem.

\begin{theorem}[Rote~\rm\cite{Rote}] \label{T:rote}
A binary sequence $\uu$ is a CS Rote sequence if and only if the sequence $S(\uu)$ is a Sturmian sequence.
\end{theorem}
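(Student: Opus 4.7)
The plan is to exploit the structure of the ``derivation'' map $S$ as a two-to-one covering between binary sequences. First I would record the basic properties of $S$: it commutes with the shift, it is two-to-one on sequences with preimages of $\vv$ being precisely $\uu$ and $\overline{\uu}$ (determined by the initial letter $u_0 \in \{\tt 0,1\}$), and it satisfies $S(u) = S(\overline u)$ for any finite word $u$. Consequently, each factor of $S(\uu)$ of length $n$ has exactly two preimages of length $n+1$, which are letter-exchange images of each other.

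For the forward direction, assume $\uu$ is a CS Rote sequence. Closure of the language of $\uu$ under letter exchange ensures that both preimages of every factor of $S(\uu)$ appear as factors of $\uu$, so $S$ restricts to a two-to-one map from the factors of $\uu$ of length $n+1$ onto the factors of $S(\uu)$ of length $n$. Combined with $p_\uu(n+1) = 2(n+1)$, this yields $p_{S(\uu)}(n) = n+1$. Aperiodicity transfers from $\uu$ to $S(\uu)$ because $\uu$ is completely determined by $u_0$ together with $S(\uu)$, so periodicity of $S(\uu)$ would propagate to $\uu$, contradicting its unbounded complexity $2n$. A binary aperiodic sequence of complexity $n+1$ is Sturmian, which concludes this direction.

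For the converse, assume $\vv = S(\uu)$ is Sturmian. Aperiodicity of $\vv$ immediately forces aperiodicity of $\uu$, and the general estimate $p_\uu(n+1) \leq 2 p_\vv(n) = 2(n+1)$ holds. It suffices to show that the language of $\uu$ is closed under letter exchange, for then equality holds and the Rote property $p_\uu(n) = 2n$ is automatic. The crucial observation is that for any factor $v$ of $\vv$ of length $n$, both preimages of $v$ appear in $\uu$ precisely when the parity of $\sum_{j<i} v_j$ takes both values $0$ and $1$ over the occurrences $i$ of $v$ in $\vv$, since $u_i = u_0 + \sum_{j<i} v_j \pmod 2$ determines which of the two preimages starts at position $i$.

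The main obstacle is thus this parity-variation claim, which is cohomological in nature. I would prove it using the rotational description of Sturmian sequences: occurrences of $v$ in $\vv$ correspond to visits of the irrational rotation by the slope $\alpha$ to a specific subinterval of the circle, and the partial sum $\sum_{j<i} v_j$ equals $\lfloor i\alpha + \beta \rfloor$ up to bounded error. The joint equidistribution of the orbit $(i\alpha \bmod 1,\, i\alpha \bmod 2)$ on the relevant subtorus, together with the irrationality of $\alpha$, forces both parities to be attained infinitely often at the return times to the subinterval. An alternative route invokes Vuillon's classical structure theorem: a factor of a Sturmian sequence has exactly two return words, of distinct lengths, and one verifies directly that their contributions to the partial sum modulo $2$ cannot both vanish, ensuring that both parities occur among the occurrences of $v$.
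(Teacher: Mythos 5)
The first thing to note is that the paper does not prove this statement at all: it is imported verbatim as Rote's theorem with a citation, so there is no internal proof to compare you against; the benchmark is only whether your blind argument is sound. In essence it is, and you have put your finger on the right crux. The forward direction (letter-exchange closure makes $S$ exactly two-to-one from factors of $\uu$ of length $n+1$ onto factors of $S(\uu)$ of length $n$, hence $p_{S(\uu)}(n)=n+1$, and aperiodicity transfers because $\uu$ is recovered from $u_0$ and $S(\uu)$) is correct. For the converse you correctly reduce everything to the claim that, for every factor $v$ of the Sturmian sequence $\vv=S(\uu)$, the parity of $\sum_{j<i}v_j$ takes both values over the occurrences $i$ of $v$; and your rotation argument does prove this: writing $\vv$ as a mechanical sequence of irrational slope $\alpha$ and intercept $\beta$, the occurrences of $v$ are exactly the times $i$ with $i\alpha+\beta$ falling (mod $1$) in an interval $I_v$ of positive length, and minimality of the rotation by $\alpha$ on the circle of circumference $2$ forces the orbit of $i\alpha+\beta$ taken modulo $2$ to enter both $I_v$ and $I_v+1$ infinitely often, i.e., $\lfloor i\alpha+\beta\rfloor$ attains both parities at occurrences of $v$.

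Two points need tightening before this is a complete proof. First, the partial sum is not merely equal to $\lfloor i\alpha+\beta\rfloor$ ``up to bounded error'': an error term varying with $i$ would destroy all parity information. What you need (and what is true) is the exact telescoping identity $\sum_{j<i}v_j=\lfloor i\alpha+\beta\rfloor-\lfloor\beta\rfloor$, which requires invoking explicitly the classical equivalence (Morse--Hedlund/Coven--Hedlund) between the paper's working definition of Sturmian (reversal-closed with one left special factor per length), the complexity-$(n+1)$ characterization you use in the forward direction, and the mechanical/rotation coding you use in the converse. Second, your alternative route via Vuillon's theorem is, as written, only an assertion (``one verifies directly''); to make it a proof you would need an actual reason why the two return words of $v$ cannot both contain an even number of ${\tt 1}$'s --- for instance, that the matrix of their abelianizations is unimodular, so if both weights were even its determinant would be even rather than $\pm 1$. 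With the first fix made, your main line of argument is complete; the return-word route is optional and should either be fleshed out or dropped.
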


We say that a CS Rote sequence $\uu$ is \emph{standard} if both ${\tt 0}\uu$ and ${\tt 1}\uu$ are CS Rote sequences. Equivalently, a sequence $\uu$ is standard CS Rote if and only if $S(\uu)$ is standard Sturmian.
The relation between pseudopalindromic prefixes of a standard CS Rote sequence $\uu$ and palindromic prefixes of a standard Sturmian sequence $S(\uu)$ is as follows.
\begin{lemma}[Lemma~37~\cite{MaPa}]\label{lem:pseudopalindromesRote}
Let $\uu$ be a standard CS Rote sequence. Let $u_0=\varepsilon, u_1, u_2, \dots$ be the palindromic prefixes of $S(\uu)$ ordered by length, and $w_0=\varepsilon, w_1, w_2, \dots$ be the pseudopalindromic prefixes of $\uu$ ordered by length. Then $S(w_{n+1})=u_n$ for all $n \in \naturals, n \geq 1$.
\end{lemma}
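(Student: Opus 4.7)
The plan is to exploit two elementary properties of the map $S$. First, since any binary word $w$ of length $n+1\geq 2$ is determined by the pair $(w_0, S(w))$ via the recurrence $w_{i+1}=w_i+S(w)_i\pmod 2$, and since the initial letter of $\uu$ is fixed, the map $w\mapsto S(w)$ is a length-decreasing bijection between the prefixes of $\uu$ of length $n+1\geq 2$ and the prefixes of $S(\uu)$ of length $n\geq 1$. Second, I claim that under this bijection the pseudopalindromic prefixes of $\uu$ correspond exactly to the palindromic prefixes of $S(\uu)$. Once both points are established, listing the two families in order of length and matching indices will yield $S(w_{n+1})=u_n$ for all $n\geq 1$.

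For the forward direction---that $S$ sends a pseudopalindrome to a palindrome---the computation is direct. Writing $w=w_0w_1\cdots w_n$ and $\sigma_i=S(w)_i=w_i+w_{i+1}\pmod 2$, the palindromic condition $w_i=w_{n-i}$ gives $\sigma_{n-1-i}=w_{n-1-i}+w_{n-i}=w_{i+1}+w_i=\sigma_i$, and the antipalindromic condition $w_i=1-w_{n-i}$ gives $\sigma_{n-1-i}=(1-w_{i+1})+(1-w_i)\equiv\sigma_i\pmod 2$. In either case $S(w)$ is a palindrome.

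For the reverse direction the natural trick is to apply $S$ to the reversal. Assume $S(w)$ is a palindrome; then $S(R(w))_i=w_{n-i}+w_{n-1-i}=S(w)_{n-1-i}=S(w)_i$, so $R(w)$ and $w$ have the same $S$-image. By the uniqueness recalled in the first paragraph, $R(w)$ and $w$ can differ only in their first letter, so either $R(w)=w$ (and $w$ is a palindrome) or $R(w)_i=\overline{w_i}$ for every $i$. The latter reads $w_{n-1-i}=\overline{w_i}$, which is precisely the condition $w=E(w)$, so $w$ is an antipalindrome.

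Combining the two directions, the pseudopalindromic prefixes of $\uu$ of length $\geq 1$ map bijectively under $S$ onto the palindromic prefixes of $S(\uu)$; since the map strictly decreases length by one and preserves the length-ordering, matching the ordered lists yields $S(w_{n+1})=u_n$ for $n\geq 1$ (the case $n=0$ is the trivial $S(w_1)=\varepsilon$). The main point requiring care is really only this final matching: one must verify that nothing extra appears on the left and nothing is missed on the right, which reduces to the observation that prefixes of a sequence are uniquely determined by their length, so counting by length aligns the two lists automatically---no deeper combinatorial structure of standard CS Rote or standard Sturmian sequences is needed beyond the existence of infinitely many (pseudo)palindromic prefixes guaranteed by the standardness hypothesis.
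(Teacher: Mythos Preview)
The paper does not give its own proof of this lemma: it is quoted verbatim as Lemma~37 from \cite{MaPa} and used as a black box. So there is no argument in the present paper to compare yours against.

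That said, your proof is correct and entirely self-contained. The key observation---that two binary words with the same $S$-image differ by at most a global letter exchange---cleanly gives $R(w)\in\{w,\overline{w}\}$ whenever $S(w)$ is a palindrome, which is exactly the pseudopalindromicity of $w$. Combined with the length-$(-1)$ bijection between prefixes of $\uu$ and prefixes of $S(\uu)$, the index matching $S(w_{n+1})=u_n$ follows. One tiny slip: with $w=w_0\cdots w_n$ (length $n+1$) you have $R(w)_i=w_{n-i}$, so the antipalindromic condition reads $w_{n-i}=\overline{w_i}$, not $w_{n-1-i}=\overline{w_i}$; this does not affect the argument. Your remark that no special structural property of standard CS Rote or standard Sturmian sequences is needed (beyond having infinitely many pseudopalindromic, resp.\ palindromic, prefixes) is also accurate.
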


\begin{remark}
Let us explain that it is not possible to use known attractors of palindromic prefixes of standard Sturmian sequences to obtain attractors of pseudopalindromic prefixes of CS Rote sequences.

Consider the following palindromic prefix $u=\tt 010010010$ of a standard Sturmian sequence. The corresponding standard CS Rote sequence starting with $\tt 0$ has the antipalindromic prefix $w=\tt 0011100011$, \textit{i.e.}, $S(w)=u$. 
Let us underline the positions of the attractor of $u$ from Theorem~\ref{thm:sturmian_attractor}: $u=\tt 0\underline{1}0010\underline{0}10$ and also from \cite{Mantaci2021}(Theorem 22): $u=\tt 010010\underline{01}0$. Now, the factor $\tt 10$ has a~unique occurrence in $w$, therefore each attractor of $w$ has to contain either the position $4$ or $5$. However, there is no straightforward way to get such positions from the underlined attractors of $u$ (or their mirror image from Observation~\ref{rem:mirror}).
\end{remark}

Blondin-Massé at al. \cite{MaPa} showed that standard CS Rote sequences form a subclass of binary generalized pseudostandard sequences. Moreover, they described precisely the form of the corresponding directive bi-sequence.  
\begin{theorem}[\cite{MaPa}]\label{thm:Blondin-Masse} A sequence $\uu$ is a standard CS Rote sequence if and only if it is aperiodic and
$\uu=\uu(\Delta, \Theta)$ for some directive bi-sequence $(\Delta, \Theta)$ such that $\Theta$ starts with $R$ and no factor of length
two of the directive bi-sequence is in the set 
\begin{displaymath}
\left\{(ab,EE) : a,b \in \{\tt 0, 1\}\right\}\cup \left\{(a\overline{a},RR) : a \in \{\tt 0, 1\}\right\}\cup \left\{(aa,RE) : a \in \{\tt 0, 1\}\right\}\,.
\end{displaymath}
Moreover, the prefixes $w_n$ from Definition~\ref{def:generalized_pseudostandard} coincide with all pseudopalindromic prefixes of $\uu$.
\end{theorem}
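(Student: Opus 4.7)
My plan is to leverage the bijection between standard CS Rote sequences and standard Sturmian sequences provided by Theorem~\ref{T:rote}, combined with Lemma~\ref{lem:pseudopalindromesRote}, which identifies the pseudopalindromic prefixes of $\uu$ with lifts of palindromic prefixes of $S(\uu)$. A useful preliminary fact, which I would establish first, is a parity dichotomy for lifts: given a palindrome $u$ and a starting letter $a$, the unique lift $w$ with $S(w)=u$ and $w_0=a$ is defined by the running sum $w_i = a \oplus \bigoplus_{j<i} u_j \pmod 2$, and a direct computation using the palindromicity of $u$ shows that $w$ is an antipalindrome if and only if $|u|$ is odd with middle letter $\tt 1$, and a palindrome otherwise.

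For the forward direction, suppose $\uu$ is a standard CS Rote sequence with first letter $a$. Let $u_0 = \varepsilon, u_1, u_2, \ldots$ be the palindromic prefixes of $S(\uu)$ ordered by length. By Lemma~\ref{lem:pseudopalindromesRote}, $w_{n+1}$ is the unique lift of $u_n$ with first letter $a$, and by the preliminary fact its type $\vartheta_{n+1} \in \{R, E\}$ is determined. Take $\delta_{n+1}$ to be the unique letter such that $w_n \delta_{n+1}$ is a prefix of $\uu$. Then $(w_n \delta_{n+1})^{\vartheta_{n+1}} = w_{n+1}$ by a sandwich argument: the left-hand side is a pseudopalindrome of type $\vartheta_{n+1}$ extending $w_n \delta_{n+1}$, so its length is at most $|w_{n+1}|$; it cannot be strictly shorter, or else it would be a pseudopalindromic prefix strictly between $w_n$ and $w_{n+1}$, contradicting consecutiveness. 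Since $w_1$ is a single letter, $\vartheta_1 = R$. Aperiodicity of $\uu$ is automatic since CS Rote sequences have complexity $2n$.

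The main obstacle is verifying that the three forbidden patterns are precisely the impossible transitions. For each forbidden pair $(\delta_n \delta_{n+1}, \vartheta_n \vartheta_{n+1})$, I would translate the hypothesis into constraints on the parities and middle letters of $u_{n-1}, u_n, u_{n+1}$ and on the underlying Sturmian directive letters $\delta'_n, \delta'_{n+1}$, then derive a contradiction using Proposition~\ref{prop:Sturmian_palindromes} to control the one-step evolution of these quantities under palindromic closure. For example, $\vartheta_n \vartheta_{n+1} = EE$ forces both $|u_{n-1}|$ and $|u_n|$ to be odd with middle letter $\tt 1$, which a parity computation on the closure formula rules out; the patterns $(a\overline{a}, RR)$ and $(aa, RE)$ are handled by analogous casework that exploits how $\delta_n$ encodes the letter immediately following $w_{n-1}$ in $\uu$, together with how this letter relates to the Sturmian directive $\delta'_n$ via the $S$ map.

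For the converse direction, given aperiodic $\uu = \uu(\Delta, \Theta)$ with $(\Delta, \Theta)$ avoiding the forbidden factors and $\vartheta_1 = R$, I would reconstruct a candidate Sturmian directive $\Delta'$ from $(\Delta, \Theta)$ and prove by induction on $n$ that $S(w_{n+1})$ equals the $n$-th palindromic prefix of $\mathbf{u}(\Delta', R^\omega)$; the allowed-pattern restrictions are exactly what is needed to make the parity bookkeeping consistent at each inductive step. Theorem~\ref{T:rote} then implies $\uu$ is CS Rote, and standardness follows because both lifts of $S(\uu)$ are realized as CS Rote sequences. The moreover statement that the $w_n$ exhaust the pseudopalindromic prefixes of $\uu$ is then immediate: every pseudopalindromic prefix projects under $S$ to a palindromic prefix of the standard Sturmian $S(\uu)$, and by Lemma~\ref{lem:pseudopalindromesRote} together with the parity dichotomy the pseudopalindromic prefix is uniquely reconstructed from that projection.
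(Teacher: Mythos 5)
First, note that the paper itself does not prove Theorem~\ref{thm:Blondin-Masse}: it is quoted from \cite{MaPa}, so there is no in-paper argument to compare against and your attempt must be judged on its own. Your skeleton (work at the level of $S(\uu)$ via Theorem~\ref{T:rote} and Lemma~\ref{lem:pseudopalindromesRote}, together with the parity dichotomy for lifts, which is a correct and useful observation) is reasonable, but one concrete step as written fails. In the forward direction, your sandwich argument for $(w_n\delta_{n+1})^{\vartheta_{n+1}}=w_{n+1}$ asserts that if the closure were strictly shorter than $w_{n+1}$ it would be a pseudopalindromic prefix of $\uu$ strictly between $w_n$ and $w_{n+1}$. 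But the closure is only known to be the shortest pseudopalindrome of type $\vartheta_{n+1}$ \emph{beginning with} $w_n\delta_{n+1}$; nothing yet guarantees it is a prefix of $\uu$, and pseudopalindromic extensions of a common word need not be prefixes of one another (e.g. $({\tt 01})^{R}={\tt 010}$ is not a prefix of the palindrome ${\tt 0110}$). The lower bound must come from elsewhere, for instance by applying $S$ (which sends both palindromes and antipalindromes to palindromes), invoking the classical fact that for a standard Sturmian sequence the palindromic closure of any prefix is again a prefix, and then using that a type-$\vartheta$ pseudopalindrome of length at most $2(|w_n|+1)$ with prefix $w_n\delta_{n+1}$ is uniquely determined by that prefix.

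The larger gap is that the actual content of the theorem is only announced, not proved. Both that the constructed bi-sequence avoids exactly the three listed length-two patterns, and the converse that any aperiodic $\uu(\Delta,\Theta)$ with $\Theta$ starting in $R$ and avoiding them is a standard CS Rote sequence whose pseudopalindromic prefixes are precisely the $w_n$, are deferred to casework you say you ``would'' carry out; this bookkeeping relating $(\delta_n,\vartheta_n)$ to the Sturmian directive letters, parities and middle letters is precisely the substantial part of \cite{MaPa} and cannot be taken on faith. In the converse you also never use the aperiodicity hypothesis, yet it is needed (cf.\ Theorem~\ref{thm:periodicity}) to ensure the reconstructed directive $\Delta'$ contains both letters infinitely often, i.e.\ that $\mathbf u(\Delta',R^{\omega})$ is genuinely standard Sturmian rather than periodic, without which Theorem~\ref{T:rote} cannot be applied. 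Finally, be aware that your argument leans on Lemma~\ref{lem:pseudopalindromesRote}, itself Lemma~37 of \cite{MaPa}; within the present paper that reliance is legitimate, but it underlines that what you have is an outline of how the proof in \cite{MaPa} could be organized, not yet a proof.
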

The aperiodicity of a binary generalized pseudostandard sequence may be recognized easily.
\begin{theorem}[\cite{DvFl16}]\label{thm:periodicity}
Let $(\Delta, \Theta)$ be a directive bi-sequence. Then $\uu=\uu(\Delta, \Theta)$ is aperiodic if and only if there is no bijection $f:\{E, R\} \to \{\tt 0, 1\}$ such that $f(\vartheta_n)=\delta_{n+1}$ for all sufficiently large $n$.   
\end{theorem}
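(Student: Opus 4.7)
The plan is to prove both directions of the equivalence, which can be rephrased as: ``$\uu(\Delta,\Theta)$ is eventually periodic if and only if such a bijection $f$ exists.''

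For the direction bijection $\Rightarrow$ eventually periodic, assume $f:\{E,R\}\to\{\tt 0,\tt 1\}$ is a bijection with $f(\vartheta_n)=\delta_{n+1}$ for all $n\geq N$. There are exactly two such bijections, and the $\tt 0\leftrightarrow\tt 1$ symmetry reduces the argument to one of them, say $f(R)=\tt 0$ and $f(E)=\tt 1$. I would prove by induction on $n\geq N$ that $w_n$ is a prefix of a fixed periodic sequence $\vv$, whose period depends only on the eventual pattern of $\Theta$. The induction step examines the four cases $(\vartheta_n,\vartheta_{n+1})\in\{R,E\}^2$, computing $(w_n\delta_{n+1})^{\vartheta_{n+1}}$ with the specific choice $\delta_{n+1}=f(\vartheta_n)$. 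In each sub-case a direct calculation should show that $w_n\delta_{n+1}$ already has a pseudopalindromic suffix long enough that the closure adds only a short, predictable block, which extends $w_n$ to a longer prefix of $\vv$. Consequently $\uu$ agrees with $\vv$ from position $|w_N|$ onward, which gives eventual periodicity.

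For the converse, I would use the contrapositive: assume $\uu$ is eventually periodic and construct the bijection. Write $\uu=pq^{\omega}$ with $q\in\{\tt 0,\tt 1\}^+$ of minimal period. Fix $N$ so large that $|w_N|>|p|+2|q|$; for $n\geq N$ the prefix $w_n$ lies entirely inside the periodic tail, and its pseudopalindromicity becomes a constraint on the cyclic structure of $q$. If $\vartheta_n=R$, then $w_n$ is a palindromic prefix of $pq^\omega$, which forces $q$ to lie in a specific conjugacy class with respect to $R$; this pins the letter $u_{|w_n|}=\delta_{n+1}$ to a single value $a_R$. Analogously, $\vartheta_n=E$ forces $\delta_{n+1}$ to equal a fixed letter $a_E$ determined by the conjugacy class of $q$ under $E$. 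A final lemma, using the fact that $q$ cannot simultaneously be conjugate to both $R(q)$ and $E(q)$ in the same alignment (unless $q$ is of one of the trivial shapes $a$ or $a\overline a$), shows $a_R\neq a_E$; thus $f(R):=a_R$, $f(E):=a_E$ is a genuine bijection. When only one of $R,E$ appears in $\Theta$ from some index onward, the value of $f$ on the unused symbol is free, and the argument collapses to the Sturmian case ($\Theta$ eventually $R^\omega$) or the pseudostandard case ($\Theta$ eventually $E^\omega$), which can be treated directly by inspecting the finite list of periodic sequences that can arise.

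The main obstacle lies in the converse direction, specifically in translating the palindromic and antipalindromic symmetries of arbitrarily long prefixes of $pq^\omega$ into a positional identity modulo $|q|$ that uniquely determines $\delta_{n+1}$ from $\vartheta_n$, and in proving $a_R\neq a_E$. Once the conjugacy classes of $q$ under $R$ and $E$ are pinned down, these constraints are essentially linear conditions modulo $|q|$, but careful bookkeeping of the alignment of $w_n$ inside $pq^\omega$ is required; the degenerate short-period cases ($|q|\in\{1,2\}$) must be handled separately to match the trivial examples $\Delta={\tt 0}^\omega$ with $\Theta=R^\omega$ (giving $\uu={\tt 0}^\omega$) and $\Delta={\tt 0}^\omega$ with $\Theta=E^\omega$ (giving $\uu=({\tt 01})^\omega$), and analogously with $\tt 1$ in place of $\tt 0$.
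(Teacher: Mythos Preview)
The paper does not prove this theorem at all: it is quoted verbatim from~\cite{DvFl16} and used as a black box (only once, to guarantee that the sequence $\Theta$ contains infinitely many $E$'s in the proof of Theorem~\ref{thm:Rote_atrractor}). There is therefore no ``paper's own proof'' to compare your proposal against.

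As for the proposal itself, it is a reasonable outline of the two directions but remains a sketch rather than a proof. In the forward direction you assert that ``a direct calculation should show'' the closure adds only a predictable block, but you never identify the period word $\vv$ nor carry out any of the four case computations; this is the actual content of that direction. In the converse direction the crucial claim is that a long (anti)palindromic prefix of $pq^{\omega}$ forces a unique value of the next letter depending only on $\vartheta_n$, and that the two forced values differ. Your justification (``conjugacy class of $q$ under $R$ or $E$'') is suggestive but not an argument: you would need to show precisely how the position $|w_n|\bmod |q|$ is determined by $\vartheta_n$ alone, independently of $n$, and why the two resulting residues give distinct letters of $q$. The degenerate cases you flag are real, but the main gap is that neither direction contains a single verified computation. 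If you want a complete proof, you should consult~\cite{DvFl16} directly.
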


In the proof of the main theorem on string attractors of pseudopalindromic prefixes of standard CS Rote sequences, the following statements will be useful.
\begin{observation}\label{rem:mirror}
If $w$ is a pseudopalindrome with an attractor $\Gamma$, the mirror image \begin{math}\Gamma^R=\{|w|-1-\gamma \ : \gamma \in \Gamma\}\end{math} is an attractor of $w$, too. 
\end{observation}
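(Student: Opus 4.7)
The plan is to exploit the involution symmetry of a pseudopalindrome. Writing $n=|w|$, the pseudopalindromic condition gives either $w_i=w_{n-1-i}$ for all $i$ (palindrome case) or $w_i=\overline{w_{n-1-i}}$ for all $i$ (antipalindrome case). In both cases, if a factor $f$ of length $\ell$ has an occurrence at position $i$, then applying the symmetry shows that $R(f)$ (respectively $E(f)$) has an occurrence at position $n-i-\ell$. Moreover, the position $\gamma$ is contained in the first occurrence if and only if the position $n-1-\gamma$ is contained in the second, since $i\le\gamma\le i+\ell-1$ is equivalent to $n-i-\ell\le n-1-\gamma\le n-1-i$.

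Armed with this, I would argue as follows. Let $f$ be an arbitrary factor of $w$. Set $g=R(f)$ if $w$ is a palindrome and $g=E(f)$ if $w$ is an antipalindrome. By the symmetry of $w$ just recalled, $g$ is also a factor of $w$. Since $\Gamma$ is an attractor of $w$, the factor $g$ has some occurrence, say at position $j$, that contains an element $\gamma\in\Gamma$. Applying the symmetry once more to this occurrence of $g$ produces an occurrence of $f$ at position $n-j-|f|$ that contains the position $n-1-\gamma$, which by definition belongs to $\Gamma^R$. Since $f$ was arbitrary, $\Gamma^R$ is indeed an attractor.

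There is essentially no obstacle here; the only thing to be mildly careful about is treating the palindrome and antipalindrome cases in a unified way and checking that the inequalities defining "crossing an attractor position" are preserved under the index reflection $i\mapsto n-1-i$. Both verifications are immediate from the definitions, so the argument fits in a few lines.
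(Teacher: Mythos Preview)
Your argument is correct and is the natural way to justify this observation; in the paper itself the statement is simply recorded as an observation without proof, so there is nothing to compare against. Your treatment of the two cases via the involution $i\mapsto n-1-i$ and the check that the ``crossing'' inequalities are preserved is exactly what one would write if a proof were required.
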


\begin{lemma}\label{lem:Rote_pseudopalindrome}
Let $\uu$ be a standard CS Rote sequence and let $(w_n)_{n=1}^{\infty}$ be the sequence of all non-empty pseudopalindromic prefixes of $\uu$, ordered by length. 
Then for $n\geq 2$ and $a \in \{\tt 0,1\}$ such that $w_{n-1}a$ is a prefix of $w_n$, we have:
\begin{enumerate}
\item If $w_{n-1}=R(w_{n-1})$ and $w_n=R(w_n)$, then $w_n=w_{n-1}a\overline{w}$, where $w$ is the longest antipalindromic prefix of $w_n$ followed by ${a}$.
\item If $w_{n-1}=R(w_{n-1})$ and $w_n=E(w_n)$, then $w_n=w_{n-1}a\overline{w}$, where $w$ is the longest palindromic prefix of $w_n$ followed by $\overline{a}$.
\item  If $w_{n-1}=E(w_{n-1})$ and $w_n=R(w_n)$, then $w_n=w_{n-1}a{w}$, where $w$ is the longest palindromic prefix of $w_n$ followed by ${a}$.
\end{enumerate}
\end{lemma}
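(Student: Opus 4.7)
The plan is to translate everything to the Sturmian sequence $S(\uu)$ via Theorem~\ref{T:rote} and Lemma~\ref{lem:pseudopalindromesRote}, apply Proposition~\ref{prop:Sturmian_palindromes} to $S(\uu)$, and then lift the resulting identity back to $\uu$ using the fact that $w_n$ is uniquely determined by $S(w_n)$ and the first letter $z$ of $\uu$. First I invoke Lemma~\ref{lem:pseudopalindromesRote} to identify $S(w_n)=u_{n-1}$, where $(u_k)$ are the palindromic prefixes of $S(\uu)$ ordered by length, and then apply Proposition~\ref{prop:Sturmian_palindromes} to write
\[
u_{n-1}=u_{n-2}\, b\, \overline{b}\, t,
\]
with $b$ the letter such that $u_{n-2}b$ is a prefix of $u_{n-1}$ and $t$ the longest palindromic prefix of $u_{n-1}$ followed by $\overline{b}$. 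Writing $w_n=w_{n-1}\, a\, w'$ with $|w'|=|t|+1$ and decoding $S(w_n)$ position by position, the equality $S(w_n)_{|w_{n-1}|-1}=b$ yields $b\equiv l_{n-1}+a\pmod 2$ (where $l_{n-1}$ denotes the last letter of $w_{n-1}$), and $S(w_n)_{|w_{n-1}|}=\overline{b}$ forces the first letter of $w'$ to be $\overline{l_{n-1}}$; the remaining positions give $S(w')=t$. Let $\tilde w$ be the unique pseudopalindromic prefix of $\uu$ with $S(\tilde w)=t$, so that $\tilde w$ starts with $z$. Comparing first letters identifies $w'=\tilde w$ in Case~3 (where $l_{n-1}=\overline{z}$) and $w'=\overline{\tilde w}$ in Cases~1 and~2 (where $l_{n-1}=z$), matching the shape of the three claims in the lemma.

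Next I would verify that $\tilde w$ has the required pseudopalindromic type. In Case~1, the symmetry $w_n=R(w_n)$ applied to $w_n=w_{n-1}\, a\, \overline{\tilde w}$ gives $w_n=\overline{R(\tilde w)}\, a\, w_{n-1}=E(\tilde w)\, a\, w_{n-1}$ (using $w_{n-1}=R(w_{n-1})$), so the prefix of length $|\tilde w|$ of $w_n$ equals $E(\tilde w)$; since $\tilde w$ is itself a prefix of $w_n$ (as $|\tilde w|\leq |w_{n-1}|$), this forces $\tilde w=E(\tilde w)$, i.e., $\tilde w$ is an antipalindrome. Cases~2 and~3 are entirely parallel and, with the appropriate symmetry of $w_n$ and type of $w_{n-1}$, both yield $\tilde w=R(\tilde w)$, i.e., $\tilde w$ is a palindrome. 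To check that $\tilde w$ is followed by the correct letter in $w_n$, I would use $u_{n-1,\,|t|}=\overline{b}$, which rewrites as $w_n^{(|\tilde w|-1)}+w_n^{(|\tilde w|)}\equiv\overline{b}\pmod 2$; substituting the last letter of $\tilde w$ (namely $\overline{z}$ in Case~1 and $z$ in Cases~2 and~3) together with $b=a+l_{n-1}$ produces exactly $a$ in Cases~1 and~3 and $\overline{a}$ in Case~2.

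Maximality is the final ingredient: suppose some pseudopalindromic prefix $w^{*}$ of $w_n$, of the required type and follow-letter, has $|w^{*}|>|\tilde w|$. Then $S(w^{*})$ is a palindromic prefix of $u_{n-1}$ with $|S(w^{*})|>|t|$, and the analogous last-letter computation shows $S(w^{*})$ is followed in $u_{n-1}$ by $\overline{b}$, contradicting the maximality of $t$. The main obstacle I expect is the complement bookkeeping: the three cases differ only in whether $l_{n-1}$ equals $z$ or $\overline{z}$ and, correspondingly, in whether the last letter of $\tilde w$ is $z$ or $\overline{z}$, and one must verify carefully that these two flips conspire to yield precisely the letter ($a$ or $\overline{a}$) required by each case, and that the analogous verification carries through for any candidate longer prefix $w^{*}$. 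A secondary issue is that Proposition~\ref{prop:Sturmian_palindromes} requires $u_{n-1}$ to contain both letters, so the smallest values of $n$ (where $u_{n-1}$ is a power of the starting letter) must be handled by direct inspection.
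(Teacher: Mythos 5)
Your proposal is correct and takes essentially the same route as the paper's proof: translate to the Sturmian sequence via Lemma~\ref{lem:pseudopalindromesRote} and Theorem~\ref{T:rote}, apply Proposition~\ref{prop:Sturmian_palindromes}, lift the decomposition back with first/last-letter bookkeeping, and obtain maximality from the maximality of the palindromic prefix in the Sturmian word, with the degenerate one-letter cases deferred to inspection exactly as in the paper. The only cosmetic difference is that you deduce the pseudopalindromic type of the suffix from the symmetry of $w_n$ (comparing its prefix of length $|\tilde w|$ with $\tilde w$), whereas the paper gets pseudopalindromicity directly from Lemma~\ref{lem:pseudopalindromesRote} and then reads off palindrome versus antipalindrome from the first and last letters.
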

\begin{proof}
Assume without loss of generality that the Rote sequence starts with $\tt 0$.
The reader is invited to check the cases, where $S(w_n)$ contains only one letter, \textit{i.e.}, the cases where $w_n={\tt 0}^n$ or $w_n=({\tt 01})^{\frac{n}{2}}$ for $n$ even or $w_n=({\tt 01})^{\frac{n-1}{2}}\tt 0$ for $n$ odd. In the sequel, assume $S(w_n)$ contains both letters. The possible prefixes of $(\Delta, \Theta)$ are given in Theorem~\ref{thm:Blondin-Masse}.
   \begin{enumerate}
    \item Since $w_n=w_{n-1}aw$ and $w_{n-1}$ as a palindrome has $\tt 0$ as both the first and the last letter, then by Lemma~\ref{lem:pseudopalindromesRote}, Theorem~\ref{T:rote}, and Proposition~\ref{prop:Sturmian_palindromes}, we obtain $S(w_n)=u_n=u_{n-1}a\overline{a}u=S(w_{n-1})a\overline{a}S(w)$, where $u_n$ is the $n$-th palindromic prefix of the corresponding Sturmian sequence and $u$ is the longest palindromic prefix of $u_n$ followed by $\overline{a}$. Consequently, $w_n$ is equal to $w_{n-1}aw$, where $w$ starts with $\tt 1$ (since $S(aw)=\overline{a}u$), ends with $\tt 0$ (since $w_n$ ends with $\tt 0$) and is a pseudopalindrome by Lemma~\ref{lem:pseudopalindromesRote}. Thus $w=E(w)$. Therefore, since $w$ is a suffix of $w_n=R(w_n)$, we get $\overline{w}$ is an antipalindromic prefix of $w_n$ followed by ${a}$. Moreover, it is the longest antipalindromic prefix with this property because $S(\overline{w})=u$, where $u$ is the longest palindromic prefix of $S(w_n)$ followed by $\overline a$.   
\item The proof is similar as before. Since $w_n=w_{n-1}a w$ and $w_{n-1}$ as a~palindrome has $\tt 0$ as both the first and the last letter, then by Lemma~\ref{lem:pseudopalindromesRote}, Theorem~\ref{T:rote}, and Proposition~\ref{prop:Sturmian_palindromes}, we obtain $S(w_n)=u_{n-1}a\overline{a}u$, where $u$ is the longest palindromic prefix of $u_n$ followed by $\overline{a}$. Consequently, $w_n$ is equal to $w_{n-1}aw$, where $w$ starts with $\tt 1$ (since $S(aw)=\overline{a}u)$, ends with $\tt 1$ (since $w_n$ ends with $\tt 1$) and is a pseudopalindrome by Lemma~\ref{lem:pseudopalindromesRote}. Therefore, since $w$ is a suffix of $w_n=E(w_n)$, we get $\overline{w}$ is the longest palindromic prefix of $w_n$ followed by $\overline{a}$.  
\item Since $w_n=w_{n-1}aw$ and $w_{n-1}$ as an antipalindrome has $\tt 1$ as the last letter, then by Lemma~\ref{lem:pseudopalindromesRote}, Theorem~\ref{T:rote}, and Proposition~\ref{prop:Sturmian_palindromes}, we obtain $S(w_n)=u_{n-1}\overline{a}au$, where $u$ is the longest palindromic prefix of $u_n$ followed by ${a}$. Consequently, $w_n$ is equal to $w_{n-1}aw$, where $w$ starts with $\tt 0$, ends with $\tt 0$ and is a pseudopalindrome by Lemma~\ref{lem:pseudopalindromesRote}. Therefore, since $w$ is a suffix of $w_n=R(w_n)$, we get $w$ is the longest palindromic prefix of $w_n$ followed by ${a}$.  

\end{enumerate}    
\end{proof}

Let us state the main theorem describing the minimal string attractors of pseudopalindromic prefixes of standard CS Rote sequences. 
\begin{theorem}\label{thm:Rote_atrractor}
Let $\uu$ be a standard CS Rote sequence, then the size of the minimal attractor of any pseudopalindromic prefix equals the number of distinct letters contained in the prefix.  
More precisely, let $(w_n)_{n=1}^{\infty}$ be the sequence of all non-empty pseudopalindromic prefixes of $\uu$ ordered by length and consider and consider $w_n$ containing both letters and let $w_{n-1}a$ be a prefix of $w_n$, where $a \in \{\tt 0,1\}$. Then the minimal attractor of the pseudopalindromic prefix $w_n$ is of the following form:
\begin{enumerate}
    \item If $w_n=E(w_n)$ and $w$ is the longest antipalindromic prefix of $w_n$ followed by $\overline{a}$, then 
    \begin{displaymath}
    \Gamma=\{|w|, |w_{n-1}|\}
    \end{displaymath}
    is an attractor of $w_n$.
    \item If $w_n=R(w_n)$, $w_{n-1}=E(w_{n-1})$, and $w$ is the longest palindromic prefix of $w_n$ followed by $\overline{a}$, then 
    \begin{displaymath}
    \Gamma=\{|w|, |w_{n-1}|\}
    \end{displaymath}
    is an attractor of $w_n$.
    \item If $w_n=R(w_n)$, $w_{n-1}=R(w_{n-1})$, and $m$ is the minimum index satisfying that $w_i=R(w_i)$ for all $i \in \{m,\dots, n\}$, then the attractor of $w_m$ from Item 2 is an attractor of $w_n$. 
\end{enumerate}
    
\end{theorem}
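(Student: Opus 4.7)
The plan begins with the easy half: any attractor of $w_n$ must contain at least one occurrence of each distinct letter, so whenever $w_n$ contains both $\tt 0$ and $\tt 1$ a minimum attractor has size at least two; each proposed $\Gamma$ has size exactly two, so once we establish that $\Gamma$ is an attractor, minimality comes for free. The attractor property itself I would prove by induction on $n$, using Theorem on Blondin-Massé to see that the pair $(w_{n-1}, w_n)$ lies in exactly one of the three configurations of the statement (because $EE$ is forbidden in the directive bi-sequence) and using Lemma on Rote pseudopalindromes to supply the corresponding structural decomposition. The base case treats the smallest $n$ for which $w_n$ contains both letters; here $w_n$ has a short explicit form that can be verified by direct inspection.

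For Cases 1 and 2 I would take the decomposition $w_n = w_{n-1} a \overline{w}$ or $w_n = w_{n-1} a w$ from Lemma on Rote pseudopalindromes, with $w$ the specific extremal prefix named in $\Gamma$. The pseudopalindromic symmetry of $w_n$ gives that the suffix of $w_n$ of length $|w_{n-1}|$ is the $R$- or $E$-image of $w_{n-1}$; every factor $f$ of $w_n$ therefore either (i) lies entirely in the prefix copy of $w_{n-1}$, (ii) straddles the junction position $|w_{n-1}|$, or (iii) lies entirely in the suffix copy of $w_{n-1}$. Case (ii) is immediate; in (i) the inductive attractor of $w_{n-1}$ gives an occurrence of $f$ crossing one of its markers, and this occurrence can be shifted onto $|w|$ or $|w_{n-1}|$ by exploiting the extremality of $w$ (the longest pseudopalindromic prefix of $w_n$ followed by $\overline{a}$); finally (iii) reduces to (i) via Observation on mirror images applied to $w_n$.

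Case 3 I would handle by a secondary induction on $k = n - m$. The base $k = 0$ is exactly Case 2 applied to $w_m$, which holds by the minimality of $m$ (forcing $w_{m-1}$ to be an antipalindrome). For the inductive step, Item 1 of Lemma on Rote pseudopalindromes yields $w_n = w_{n-1} a \overline{u}$ with $u$ the longest antipalindromic prefix of $w_n$ followed by $a$; since $w_n$ is a palindrome, the suffix of $w_n$ of length $|w_{n-1}|$ equals $w_{n-1}$ itself. A factor of $w_n$ then either sits in the prefix copy of $w_{n-1}$ (handled by the secondary induction), in the suffix copy (reduced to the prefix case by reversal symmetry of $w_n$), or straddles the central block $a \overline{u}$; in the last situation, since $\Gamma \subset \{0,\dots,|w_m|-1\}$, one uses the nested palindromic structure of $w_m \subset w_{m+1} \subset \cdots \subset w_n$ to relocate the straddling occurrence onto a position of $\Gamma$ inside $w_m$.

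The main obstacle I anticipate is the bookkeeping in item (i) of Cases 1 and 2: one has to check that the extremal prefix $w$ appearing in $\Gamma$ for $w_n$ is genuinely compatible with the extremal prefix used in the attractor of $w_{n-1}$, so that the inductive occurrence can be shifted onto the \emph{new} marker rather than an obsolete one. The cleanest way to secure this compatibility is to pass through the Sturmian counterpart $S(\uu)$: Proposition on Sturmian palindromes and Lemma on Rote pseudopalindromes together force $S(w_n)$ to decompose in lock-step with $w_n$, and the analogous Sturmian compatibility used in Theorem on Sturmian attractors can then be transported back through $S$ to obtain the required prefix alignment in the Rote setting.
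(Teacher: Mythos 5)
Your skeleton -- induction on $n$ starting from the first two-letter prefix, the case split coming from Theorem~\ref{thm:Blondin-Masse}, the decompositions of Lemma~\ref{lem:Rote_pseudopalindrome}, Observation~\ref{rem:mirror}, and the trivial lower bound for minimality -- coincides with the paper's, but the step you yourself flag as "the main obstacle" is the entire proof, and neither of your devices for it holds up. First, the extremal prefix named in $\Gamma$ is \emph{not} the one occurring in the lemma's decomposition: in Item 1 of the theorem $\Gamma$ uses the longest \emph{antipalindromic} prefix $w_i$ followed by $\overline a$, while the lemma factors $w_n=w_{n-1}a\,\overline{w_j}$ through the longest \emph{palindromic} prefix $w_j$ followed by $\overline a$ (and in Item 2 the lemma's prefix is followed by $a$, not $\overline a$). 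Reconciling them requires the identity $w_{n-1}=w_i\overline a\,w_j=w_j\overline a\,\overline{w_i}$, from which the paper derives several simultaneous factorizations of $w_n$, $w_{n+1}$, $w_{n+2}$ and $w_{n+m+1}$; those factorizations, together with the mirror attractor $\Gamma_1^R$ of the \emph{previous} word (which conveniently already contains the new marker $|w_j|$), are what relocate an occurrence crossing an old marker such as $|w_{n-2}|$ onto $|w|$ or $|w_{n-1}|$. "Exploiting extremality" does not do this by itself, and your fallback -- transporting compatibility through $S$ from Theorem~\ref{thm:sturmian_attractor} -- is precisely what the remark following Lemma~\ref{lem:pseudopalindromesRote} rules out: attractor positions of $S(\uu)$ do not pull back to attractor positions of $\uu$ (the example $u={\tt 010010010}$, $w={\tt 0011100011}$ there). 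The map $S$ is safe for proving the structural lemma, not for relocating occurrences.

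Second, two of your reductions are not sound as stated. In Case 3 your split around the junction $|w_{n-1}|\notin\Gamma$ creates exactly the hard case you leave to "nested palindromic structure"; the paper instead uses the complementary factorization $w_n=w_{m-1}\,a\,w_{n-1}$ (short antipalindromic block first, obtainable by applying $R$ to your own decomposition), so that the junction \emph{is} the marker $|w_{m-1}|$ and every remaining factor lies in an occurrence of the prefix $w_{n-1}$, to which the constant attractor applies -- this is also why the paper organises the induction block-wise, from one $E$ in $\Theta$ to the next, keeping $\Gamma_2$ fixed along the run of $R$'s. And in Cases 1--2 your reduction of (iii) to (i) via Observation~\ref{rem:mirror} fails in the antipalindromic situation: a factor of the suffix copy is a factor of $E(w_{n-1})$, not of $w_{n-1}$ (e.g.\ ${\tt 100}$ occurs in the suffix of $w_4={\tt 0011100}$ but not in $w_3={\tt 0011}$), and the observation only converts a known attractor into its mirror image; it does not turn an occurrence of $E(f)$ into an occurrence of $f$. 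The paper sidesteps this by exhibiting the suffix block explicitly inside the prefix via the factorizations above. Until these relocations are written out, the proposal is a plan rather than a proof.
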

\begin{proof} First of all, Theorem~\ref{thm:Blondin-Masse} describes the form of the unique bi-sequence $(\Delta, \Theta)$ satisfying that the pseudopalindromic prefixes $w_n$ of $\uu$ correspond to the prefixes $w_n$ given by Definition~\ref{def:generalized_pseudostandard}. It follows that $\Theta$ has to start with $R$. Let us assume without loss of generality that $({\tt 0}, R)$ is the first element of $(\Delta, \Theta)$. 
If a pseudopalindromic prefix contains one letter, then any position is its attractor. Further on, let us consider pseudopalindromic prefixes $w_n$ containing two distinct letters. Let us proceed by induction on $n$. 

\medskip

Consider the first pseudopalindromic prefix $w_{k+1}$ containing both letters $\tt 0$ and $\tt 1$. Then by Theorem~\ref{thm:Blondin-Masse} $({\tt 0}^{k}{\tt 1}, R^{k}E)$ is a prefix of $(\Delta, \Theta)$ and $k\geq 1$. Then $w_{k+1}={\tt 0}^{k}{\tt 1}^{k}$ and the longest antipalindromic prefix of $w_{k+1}$ followed by $\tt 0$ is $w_0=\varepsilon$. Hence 
$\Gamma=\{0, k\}$ is clearly an attractor of $w_{k+1}=\underline{\tt 0}{\tt 0}^{k-1}\underline{\tt 1}{\tt 1}^{k-1}$ (we underlined the positions of $\Gamma$).

\medskip
Assume that for some $n\geq k+1$, we have $w_n=E(w_n)$ and $w_{n-1}a$ is a prefix of $w_n$ for $a \in \{\tt 0,1\}$ and the claim on the attractor holds, \textit{i.e.}, $w_n$ has the attractor 
\begin{equation}\label{eq:attractor}
    \begin{array}{rcl}
    \Gamma_1&=&\{|w_i|, |w_{n-1}|\}\,,
    \end{array} 
\end{equation}
    where $w_i$ is the longest antipalindromic prefix of $w_n$ followed by $\overline{a}$.
    
Let us assume $\vartheta_n=\vartheta_{n+m+1}=E$ (by Theorems~\ref{thm:Blondin-Masse} and~\ref{thm:periodicity} such an integer $m$ exists and $m\geq 1$), while $\vartheta_\ell=R$ for all $\ell \in \{n+1, \dots, n+m\}$. 
We will show that under this assumption, $w_{n+1}$ up to $w_{n+m+1}$ have also the attractors as described in Theorem~\ref{thm:Rote_atrractor}. This will prove the theorem completely. 
 
\medskip

\noindent There are four situations to be considered according to Theorem~\ref{thm:Blondin-Masse}. 
\begin{enumerate}
\item $m=1$ and $(\delta_{n-1}\delta_n\delta_{n+1}\delta_{n+2}, \vartheta_{n-1}\vartheta_n\vartheta_{n+1}\vartheta_{n+2})=(\overline{a}aa\overline{a}, RERE)$;
 \item $m\geq 2$ and $(\delta_{n-1}\delta_n\delta_{n+1}\cdots\delta_{n+m}\delta_{n+m+1}, \vartheta_{n-1}\vartheta_n\vartheta_{n+1}\cdots\vartheta_{n+m}\vartheta_{n+m+1})=(\overline{a}a^{m+1}\overline{a}, RER^mE)$;  
\item $m=1$ and $(\delta_{n-1}\delta_n\delta_{n+1}\delta_{n+2}, \vartheta_{n-1}\vartheta_n\vartheta_{n+1}\vartheta_{n+2})=(\overline{a}a\overline{a}a, RERE)$;
    \item $m\geq 2$ and $(\delta_{n-1}\delta_n\delta_{n+1}\cdots\delta_{n+m}\delta_{n+m+1}, \vartheta_{n-1}\vartheta_n\vartheta_{n+1}\cdots\vartheta_{n+m}\vartheta_{n+m+1})=(\overline{a}a\overline{a}^m a, RER^mE)$.
\end{enumerate}
We will treat the first two of them. The remaining ones are analogous. In both cases, using Lemma~\ref{lem:Rote_pseudopalindrome}, we have
\begin{equation}\label{eq:1}
w_n=w_{n-1}a\overline{w_j}\,,
\end{equation}
\begin{equation}\label{eq:2}
w_n=E(w_n)=w_j\overline{a}\overline{w_{n-1}}\,,    
\end{equation}
where $w_j$ is the longest palindromic prefix of $w_n$ followed by $\overline{a}$.
Since $w_n=(w_{n-1}a)^E$, it follows by (\ref{eq:1}) and (\ref{eq:2}) that $w_{n-1}=w_j\overline{a}x$, where $x$ is the longest antipalindromic suffix of $w_{n-1}$ preceded by $\overline{a}$, or equivalently, $\overline{x}$ is the longest antipalindromic prefix of $w_{n-1}$ followed by $\overline{a}$, \textit{i.e.}, $\overline{x}=w_i$, as defined in~(\ref{eq:attractor}). Hence, $w_{n-1}=w_j\overline{a}\overline{w_i}=w_i\overline{a}w_j$, where we used palindromicity of $w_{n-1}$ and $w_j$ and antipalindromicity of $w_i$ in the last equality.
Therefore, we get the following expressions for $w_n$, where we underlined the positions of the attractor $\Gamma_1$ of $w_n$ from~(\ref{eq:attractor}) (the first line) and the mirror image attractor  ${\Gamma_1}^R$ from Observation~\ref{rem:mirror} (the second line): 
    \begin{equation}\label{eq:m1n:1}
   \begin{array}{rcl} 
   w_n&=&w_{n-1}a\overline{w_j}=w_j\overline{a}\overline{w_i}a\overline{w_j}=w_i\underline{\overline{a}}w_j\underline{a}\overline{w_j}\,,\\
   w_n&=&w_j\overline{a}\overline{w_{n-1}}=w_j\underline{\overline{a}}\overline{w_j}\underline{a}{w_i}=w_j{\overline{a}}\overline{w_i}a\overline{w_j}\,.
        \end{array}
        \end{equation}
      
\begin{enumerate}
    \item $m=1$ and $(\delta_{n-1}\delta_n\delta_{n+1}\delta_{n+2}, \vartheta_{n-1}\vartheta_n\vartheta_{n+1}\vartheta_{n+2})=(\overline{a}aa\overline{a}, RERE)$:
    \begin{itemize}
        \item Using the definition of palindromic closure and (\ref{eq:1}), we obtain
    \begin{equation}\label{eq:m1n+1:1}
        w_{n+1}=(w_na)^R=w_{n-1}a\overline{w_j}aw_{n-1}=w_n a w_{n-1}\,.
    \end{equation}
    We will show that $\Gamma_2=\{|w_j|, |w_n|\}$ is an attractor of $w_{n+1}$ -- see the corresponding positions underlined (we rewrite $w_n$ by (\ref{eq:2})):
    \begin{equation}\label{eq:m1attractorn+1}
    w_{n+1}=w_j\underline{\overline{a}}\overline{w_{n-1}}\underline{a}w_{n-1}\,.
    \end{equation}
    By~(\ref{eq:m1n+1:1}), each factor $f$ of $w_{n+1}$ either crosses $|w_n|$, \textit{i.e.}, the underlined $a$ in~(\ref{eq:m1attractorn+1}),  or is entirely contained in $w_n$. In that case, consider the mirror image attractor ${\Gamma_1}^R=\{|w_j|, |w_n|-|w_i|-1\}$ of $w_n$. Using (\ref{eq:m1n:1}) we can rewrite $w_{n+1}$ as
    \begin{equation}\label{eq:m1n+1:2}
        w_{n+1}=\overunderbraces{&&\br{1}{\overline{w_{n-1}}}&&}{&w_j\underline{\overline{a}}&\overline{w_j}aw_i&\underline{a}w_{n-1}}{&\br{2}{w_n}}
        =\overunderbraces{&&\br{2}{\overline{w_{n-1}}}}{&w_j\underline{\overline{a}}\overline{w_i}a&\overline{w_j}&\underline{a}w_i&\overline{a}w_j}{&\br{2}{{w_{n}}}}\,,
    \end{equation}
    where we underlined the positions from $\Gamma_2$.
    From this form, we can see that $f$ contained in $w_n$ either crosses $|w_j|$, \textit{i.e.}, the underlined $\overline{a}$, or $f$ crosses the position $|w_{n}|-|w_i|-1$ in~(\ref{eq:m1n+1:2}) and is contained in the word $\overline{w_{n-1}}=\overline{w_j}\underline{a} w_i$, where we underlined the position crossed by $f$ in $\overline{w_{n-1}}$. Observing the right-hand form of $w_{n+1}$ in~(\ref{eq:m1n+1:2}), the factor $f$ has then an occurrence containing the position $|w_n|$ in $w_{n+1}$, \textit{i.e.}, the underlined $a$.
   \item 
    Next, using antipalindromic closure, we obtain
    \begin{equation}\label{eq:m1n+2:1}
      w_{n+2}=(w_{n+1}\overline{a})^E=\overbrace{w_{n-1}a\overline{w_n}}^{w_{n+1}}\overline{a}\overline{w_{n-1}}\,.  
    \end{equation}
    We will show that $\Gamma_3=\{|w_n|, |w_{n+1}|\}$ is an attractor of $w_{n+2}$. Each factor $f$ of $w_{n+2}=w_n\underline{a}w_{n-1}\underline{\overline{a}}\overline{w_{n-1}}$ (we underlined the positions of $\Gamma_3$) either crosses $|w_{n+1}|$, \textit{i.e.}, the underlined $\overline{a}$, or is entirely contained in $w_{n+1}=w_n{a}w_{n-1}$. In this case, we can write $w_{n+2}$ as
        \begin{equation}\label{eq:m1n+2:2}
            w_{n+2}=\overunderbraces{&\br{5}{w_{n+1}}}{&w_j\overline{a}&\overline{w_{n-1}}&\underline{a}&w_i\overline{a}&{w_j}&\underline{\overline{a}}&\overline{w_{n-1}}}{&\br{2}{w_n}&&\br{2}{w_{n-1}}&&}\,.
        \end{equation}
        
        By~(\ref{eq:m1attractorn+1}), $f$ then either crosses $|w_n|$, \textit{i.e.}, the underlined ${a}$, or $f$ is contained in $w_n=w_j\overline{a}\overline{w_{n-1}}$ and crosses $|w_j|$. However, since $w_n$ forms also a suffix of $w_{n+2}$, the factor $f$ has an occurrence in $w_{n+2}$ containing the position $|w_{n+1}|$, \textit{i.e.}, the underlined $\overline{a}$. 

    \end{itemize}
    \item $m\geq 2$ and $(\delta_{n-1}\delta_n\delta_{n+1}\cdots\delta_{n+m}\delta_{n+m+1}, \vartheta_{n-1}\vartheta_n\vartheta_{n+1}\cdots\vartheta_{n+m}\vartheta_{n+m+1})=(\overline{a}a^{m+1}\overline{a}, RER^mE)$: The proof that $\Gamma_2=\{|w_j|, |w_n|\}$ is an attractor of $w_{n+1}$ stays the same as above. 
    
    \begin{itemize}
        \item Using the definition of palindromic closure, since $w_{n-1}$ is the longest palindromic prefix of $w_{n+1}$ followed by $a$, we obtain
    \begin{equation}\label{eq:m2n+2}
     w_{n+2}=(w_{n+1}a)^R=w_naw_{n-1}a\overline{w_n}=w_naw_{n+1}\,. 
    \end{equation}
    We will show that $\Gamma_2=\{|w_j|, |w_n|\}$ is an attractor of $w_{n+2}$. Indeed, each factor $f$ either crosses $|w_n|$ or by~(\ref{eq:m2n+2}) $f$ is entirely contained in $w_{n+1}$. Since $w_{n+1}$ is a prefix of $w_{n+2}$, the factor $f$ has an occurrence containing an element of $\Gamma_2$.\\
    \noindent Similarly, for $k \in \{3,\dots, m\}$, we have $w_{n+k}=w_naw_{n+k-1}$.
    The attractor of $w_{n+k}$ is again equal to $\Gamma_2$: each factor $f$ either crosses $|w_n|$, or $f$ is entirely contained in the prefix $w_{n+k-1}$ of $w_{n+k}$ and the attractor of $w_{n+k-1}$ is by induction assumption $\Gamma_2=\{|w_j|, |w_n|\}$. 
    \item
    Using the definition of the antipalindromic closure, since $w_n$ is the longest antipalindromic prefix followed by $a$, we have
   \begin{displaymath}
   \begin{array}{rcl}
    w_{n+m+1}&=&(w_{n+m}\overline{a})^E=w_{n+m-1}a\overline{w_n}\overline{a}\overline{w_{n+m-1}}=w_{n+m}\overline{a}\overline{w_{n+m-1}}\,.
    \end{array}
    \end{displaymath}
    We will show that $\Gamma_3=\{|w_n|, |w_{n+m}|\}$ is an attractor of $w_{n+m+1}$. Each factor $f$ of $w_{n+m+1}=w_n\underline{a}w_{n+m-1}\underline{\overline{a}}\overline{w_{n+m-1}}$ (we underlined the positions of the expected attractor $\Gamma_3$) either crosses $|w_{n+m}|$, \textit{i.e.}, the underlined $\overline{a}$, or $f$ is entirely contained in $w_{n+m}$ or in $\overline{w_{n+m-1}}$. The word $w_{n+m+1}$ can be expressed as
        \begin{align}
            w_{n+m+1}&=\overunderbraces{&\br{3}{w_{n+m-1}}&&&&\br{3}{w_n}}{&w_n&\underline{a}&w_{k}&a&\overline{w_{n-1}}&\overline{a}&w_j&\overline{\underline{a}}&\overline{w_{n-1}}&\overline{a}&w_j&\overline{a}&\overline{w_{k}}}{&\br{7}{w_{n+m}}&&\br{5}{\overline{w_{n+m-1}}}}\,, \label{eq:m2n+m+1:1}
            \end{align}
            \begin{align}
           w_{n+m+1}&=\overunderbraces{&\br{1}{w_{n+m-1}}&&\br{1}{\overline{w_n}}}{&w_{n-1}a\overline{w_j}\underline{a}w_k&{a}&\overline{w_{n-1}}\overline{a}w_j&\overline{\underline{a}}\overline{w_{k}}&\overline{a}w_n}{&&&\br{2}{\overline{w_{n+m-1}}}}\,, \label{eq:m2n+m+1:2}
            \end{align}
            
        where $k=n+m-2$ if $m\geq 3$ and $k=n-1$ if $m=2$.
        
        \noindent If $f$ is contained in $w_{n+m}$, then $f$ crosses the attractor $\Gamma_2=\{|w_j|, |w_n|\}$ of $w_{n+m}$. It means that either $f$ crosses $|w_n|$, \textit{i.e.}, the underlined $a$ (see (\ref{eq:m2n+m+1:1})), or $f$ is contained in $w_n=w_j\underline{\overline{a}}\overline{w_{n-1}}$, where we underlined the position in $w_n$ crossed by $f$. Then by (\ref{eq:m2n+m+1:1}) the factor $f$ has an occurrence in $w_{n+m+1}$ containing $|w_{n+m}|$, \textit{i.e.}, the underlined $\overline{a}$. \\
        \noindent 
        If $f$ is contained in $\overline{w_{n+m-1}}$, then $f$ crosses its attractor $\Gamma_2=\{|w_j|, |w_n|\}$ (taking the positions only in $\overline{w_{n+m-1}}$). By~(\ref{eq:m2n+m+1:2}) the factor $f$ either crosses $|w_n|$ in $\overline{w_{n+m-1}}$, which means that $f$ crosses $|w_{n+m}|$ in $w_{n+m+1}$, \textit{i.e.}, the underlined $\overline{a}$, or $f$ is contained in $\overline{w_n}=\overline{w_j}\underline{a}w_{n-1}$ (a prefix of $\overline{w_{n+m-1}}$), where we underlined the position crossed by $f$. However, then $f$ has an occurrence in $w_{n+m+1}$ containing $|w_n|$, \textit{i.e.}, the underlined $a$, as can be observed from (\ref{eq:m2n+m+1:2}) (since $w_{n-1}$ is a prefix of $w_k$).
        
    \end{itemize}
    
\end{enumerate}
\end{proof}
\begin{example}
Let us consider a standard CS Rote sequence with the bi-sequence starting with\\ $({\tt 0011001}, RRERERE)$, \textit{i.e.}, corresponding to the situation of $m=1$ treated in the proof of Theorem~\ref{thm:Rote_atrractor}. The attractors' positions given in Theorem~\ref{thm:Rote_atrractor} for prefixes $w_n$ containing both letters are underlined.
    \begin{align*}
        w_1&=\tt {0}\\
        w_2&=\tt 0{0}\\
        w_3&=\overbrace{\tt \underline{0}0}^{w_2}{\tt \underline{1}1}\\
        w_4&=\overunderbraces{&\br{1}{w_1}}{&\tt 0&\tt \underline{0}11&\tt \underline{1}00}{&\br{2}{w_3}}\\
        w_5&=\overunderbraces{&\br{1}{w_3}}{&\tt 0011&\tt \underline{1}00&\tt \underline{0}11}{&\br{2}{w_4}}\\
        w_6&=\overunderbraces{&\br{1}{w_2}}{&\tt 00&\tt \underline{1}1100011&\tt \underline{0}0011100}{&\br{2}{w_5}}\\
        w_7&=\overunderbraces{&\br{1}{w_5}}{&\tt 0011100011&\tt \underline{0}0011100&\tt\underline{1}1100011}{&\br{2}{w_6}}
    \end{align*}
    \noindent For illustration of the case $m\geq 2$ from the proof of Theorem~\ref{thm:Rote_atrractor}, let us consider a bi-sequence starting with $({\tt 001100001}, RRERERRRE)$. The steps for $n\leq 6$ are identical with the previous example.
    \begin{align*}
        w_1&=\tt {0}\\
        w_2&=\tt 0{0}\\
        w_3&=\tt \underline{0}0\underline{1}1\\
        w_4&=\tt 0\underline{0}11\underline{1}00\\
        w_5&=\tt 0011\underline{1}00\underline{0}11\\
        w_6&=\overunderbraces{&\br{1}{w_2}}{&\tt 00&\tt \underline{1}1100011&\tt \underline{0}0011100}{&\br{2}{w_5}}\\
        w_7&=\overunderbraces{&\br{1}{w_2}}{&\tt 00&\tt \underline{1}1100011&\tt \underline{0}001110001100011100}{&\br{2}{w_5}}\\
        w_8&=\overunderbraces{&\br{1}{w_2}}{&\tt 00&\tt \underline{1}1100011&\tt \underline{0}00111000110001110001100011100}{&\br{2}{w_5}}\\
        w_9&=\overunderbraces{&\br{1}{w_5}}{&\tt 0011100011&\tt\underline{0}00111000110001110001100011100&\tt\underline{1}11000111001110001110011100011}{&\br{2}{w_8}}
    \end{align*}
\end{example}

\section{Open problems}
It still remains an open problem to find string attractors of pseudopalindromic prefixes $w_n$ of binary generalized pseudostandard sequences.
Recall partial steps done in this paper and also those done previously.  
For standard Sturmian sequences, the minimal attractors of factors containing two letters are of size two. For pseudostandard sequences we have described attractors of size three for antipalindromic prefixes and we have shown that, up to an exceptional case, they are minimal.
For pseudopalindromic prefixes of standard CS Rote sequences we have found attractors of size two. 
In both previous cases, we studied only pseudopalindromic prefixes, neither prefixes nor factors in general.
Even if we keep restricting our focus to pseudopalindromic prefixes of binary generalized pseudostandard sequences, the attractor may be larger. For instance, the minimal attractors of pseudopalindromic prefixes of length at least 8 of the Thue-Morse sequence are of size four~\cite{Kutsukake2020}.
Based on computer experiments, we conjecture that the size of minimal string attractors of pseudopalindromic prefixes of binary generalized pseudostandard sequences is of size at most four. 
An even more demanding open problem is to study attractors of $d$-ary generalized pseudostandard sequences for $d>2$ (defined in~\cite{LuDeLu} and studied in~\cite{JaPeSt2014}).

\section{Acknowledgements}
We would like to thank Karel Břinda for his great help with the programming part. We address our cordial thanks also to Francesco Dolce and Edita Pelantová for their careful reading of our manuscript. Last, but not least, we express our thanks to two anonymous referees whose thorough reports helped improve the paper.

\bibliographystyle{abbrv}
\bibliography{biblio}

\begin{thebibliography}{10}

\bibitem{Lothaire}
J.~Berstel and P.~S\'e\'ebold.
\newblock Sturmian words.
\newblock In M.~Lothaire, editor, {\em Algebraic Combinatorics on Words},
  volume~90 of {\em Encyclopedia of Mathematics and Its Applications}, pages
  45--110. Cambridge University Press, 2002.

\bibitem{MaPa}
A.~Blondin-Mass\'e, G.~Paquin, H.~Tremblay, and L.~Vuillon.
\newblock On generalized pseudostandard words over binary alphabet.
\newblock {\em J. Integer Seq.}, 16:Article 13.2.11, 2013.

\bibitem{LuDeLu}
A.~de~Luca and A.~D. Luca.
\newblock Pseudopalindrome closure operators in free monoids.
\newblock {\em Theoretical Computer Science}, 362(1):282--300, 2006.

\bibitem{LuMi94}
A.~{de Luca} and F.~Mignosi.
\newblock Some combinatorial properties of {S}turmian words.
\newblock {\em Theoretical Computer Science}, 136(2):361--385, 1994.

\bibitem{Dolce2023}
F.~Dolce.
\newblock String attractors for factors of the {T}hue-{M}orse word.
\newblock In A.~Frid and R.~Merca{\c{s}}, editors, {\em Combinatorics on
  Words}, pages 117--129, Cham, 2023. Springer Nature Switzerland.

\bibitem{Dv2023}
L.~Dvořáková.
\newblock String attractors of episturmian sequences.
\newblock {\em Theoretical Computer Science}, 986:114341, 2024.

\bibitem{DvFl16}
L.~Dvořáková and J.~Florian.
\newblock On periodicity of generalized pseudostandard words.
\newblock {\em Electronic Journal of Combinatorics}, 23:P1.2, 2016.

\bibitem{GhRoSt2023}
F.~Gheeraert, G.~Romana, and M.~Stipulanti.
\newblock String attractors of fixed points of k-bonacci-like morphisms.
\newblock In A.~Frid and R.~Merca{\c{s}}, editors, {\em Combinatorics on
  Words}, pages 192--205, Cham, 2023. Springer Nature Switzerland.

\bibitem{JaPeSt2014}
T.~Jajcayová, E.~Pelantová, and {\v S}.~Starosta.
\newblock Palindromic closures using multiple antimorphisms.
\newblock {\em Theoretical Computer Science}, 533:37--45, 2014.

\bibitem{KempaPrezza2018}
D.~Kempa and N.~Prezza.
\newblock At the roots of dictionary compression: String attractors.
\newblock In {\em Proceedings of the 50th Annual ACM SIGACT Symposium on Theory
  of Computing}, STOC 2018, page 827–840, New York, NY, USA, 2018.
  Association for Computing Machinery.

\bibitem{Kociumaka2021}
T.~Kociumaka, G.~Navarro, and N.~Prezza.
\newblock Towards a definitive measure of repetitiveness.
\newblock In {\em LATIN 2020: Theoretical Informatics: 14th Latin American
  Symposium, S\~{a}o Paulo, Brazil, January 5-8, 2021, Proceedings}, page
  207–219, Berlin, Heidelberg, 2021. Springer-Verlag.

\bibitem{Kutsukake2020}
K.~Kutsukake, T.~Matsumoto, Y.~Nakashima, S.~Inenaga, H.~Bannai, and M.~Takeda.
\newblock On repetitiveness measures of {Thue}-{Morse} words, 2020.

\bibitem{Mantaci2021}
S.~Mantaci, A.~Restivo, G.~Romana, G.~Rosone, and M.~Sciortino.
\newblock A combinatorial view on string attractors.
\newblock {\em Theoretical Computer Science}, 850:236--248, 2021.

\bibitem{Rote}
G.~Rote.
\newblock Sequences with subword complexity $2n$.
\newblock {\em Journal of Number Theory}, 46:196--213, 1993.

\bibitem{Shallit2021}
L.~Schaeffer and J.~Shallit.
\newblock String attractors for automatic sequences, 2020.

\end{thebibliography}
\label{sec:biblio}

\end{document}